\tikzstyle{vertex}=[circle, draw, inner sep=0pt, minimum size=4.5pt]
\title{On Color Critical Graphs of Star Coloring}
\author{ Harshit Kumar Choudhary \and I. Vinod Reddy 
	 \inst{}
}
\institute{Department of Computer Science and Engineering,\\
	Indian Institute of Technology Bhilai, Raipur, India \\
	\email{harshitk@iitbhilai.ac.in,  vinod@iitbhilai.ac.in},
} 
\begin{document}
	%\pagenumbering{roman}
	%\pagenumbering{arabic}
	%\pagenumbering{arabic}
	\pagestyle{plain}
	
	% \date{}
	\maketitle
 \begin{abstract}
 A \emph{star coloring} of a graph $G$  is a proper vertex-coloring such that no
path on four vertices is $2$-colored. The minimum number of colors required to obtain a star coloring of a graph $G$ is called star chromatic number and it is denoted by $\chi_s(G)$. A graph $G$ is called $k$-critical if $\chi_s(G)=k$ and $\chi_s(G -e) < \chi_s(G)$ for every edge $e \in E(G)$. In this paper,  we give a  characterization of 3-critical, $(n-1)$-critical and $(n-2)$-critical graphs with respect to star coloring, where $n$ denotes the number of vertices of $G$.  We also give upper and lower bounds on the minimum number of edges in $(n-1)$-critical and $(n-2)$-critical graphs.

%\keywords{Star coloring \and $k$-critical graphs \and Proper coloring \and Chromatic number }
\end{abstract}

%%%%%%%%%%%%%%%%%%%%%%%%%%%%%%%%%%%%%%%%%%%%%%%%%%%%%%%%%%%%%%%%%%%%%%%%%%%%%%%%%%%%%%%%%%%%%%%%%%%%%%%%%%%%%%%%%%%%%%%%%%%%%%%%%%%%%%%%%%%%%%%%%%%%%%%%%%%%%%%%%%%%%%%%%%%%%%%%%%%%%%%%%%%%%%%%%%%%%%%%%%%%%%%%%%%%%%%%%%%%%%%%%%%%%%%%%%%%%%%%%%%%%%%%%%%%%%%%%%%%%%%%%%%%%%%%%%%%%%%%%%%%%%%%%%%%%%%%%%%%%%

\section{Introduction}
All graphs considered in this paper are finite, undirected, connected and simple (without loops and multiple edges). For a graph $G=(V,E)$, we use $V(G)$ and $E(G)$ to denote the vertex set and edge set of $G$ respectively. A proper $k$-coloring of a graph $G$ is a mapping $f:V(G) \rightarrow \{1,2,\ldots, k\}$ such that $f(u) \neq f(v)$  for every edge $e=uv$ of $G$. If $G$ has a proper $k$-coloring, then $G$ is
said to be $k$-colorable.  The smallest integer $k$ such that $G$ is $k$-colorable is called the chromatic number of $G$, denoted by $\chi(G)$. A $k$-star coloring of a graph $G$ is a
proper $k$-coloring of $G$ such that every path on four vertices uses at least three distinct colors. The smallest
integer $k$ for which $G$ admits a $k$-star coloring is called the star chromatic number of $G$, denoted by $\chi_s(G)$. The name star coloring is given due to the fact that the subgraph induced by the union of any
two color classes (a subset of vertices assigned the same color) is a disjoint union
of stars.

Star coloring of graphs was introduced by Gr{\"u}nbaum in~\cite{grunbaum1973acyclic}. Star coloring of planar graphs is well-studied. Any planar graph can be star colored with $80$ colors~\cite{grunbaum1973acyclic}. Later in 2003, Ne{\v{s}}et{\v{r}}il and Ossona de Mendez~\cite{nevsetvril2003colorings} improved the result by showing that any planar graph has a star coloring with $30$ colors. They also showed that any planar bipartite graph can be star colored with $18$ colors. Further, Albertson et al.~\cite{albertson2004coloring} showed that any planar graph has a star coloring with $20$ colors. 
Guillaume Fertin et al.~\cite{fertin2004star} studied the star coloring of various graph classes such as trees, cycles, complete bipartite graphs, outerplanar graphs and  $2$-dimensional grids. It is {\sf NP}-complete to determine whether $\chi_s(G) \leq 3$, even on planar bipartite graphs~\cite{albertson2004coloring}. Star coloring problem is polynomial-time solvable on cographs~\cite{lyons2011acyclic}, line graphs of tress~\cite{omoomi2018polynomial}. For all $k \geq 3$, the $k$-star coloring is {\sf NP}-complete on bipartite graphs~\cite{coleman1983estimation}.  $3$-star coloring is {\sf NP}-complete on planar bipartite graphs~\cite{albertson2004coloring} and line graphs of subcubic graphs~\cite{lei2018star}. 

A graph $G$ is called $k$-critical if  $\chi_s(G)=k$ and $\chi_s(G-e) < \chi_s(G)$ for every edge $e \in E(G)$.
Color critical graphs with respect to proper coloring have been well studied~\cite{dirac1957theorem,kostochka1999excess,krivelevich1997minimal,gao2023tight}. However,
to the best of our knowledge, the color-critical graphs with respect to star coloring have not been explored. In this paper, we initiate the study of color-critical graphs under star coloring.

\paragraph{Our contributions.}
In this paper, we prove the following results. 
\begin{theorem}
 A graph $G$ is $3$-critical if and only if  $G$ is either $K_3$ or $P_4$. 
\end{theorem}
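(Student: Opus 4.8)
The plan is to prove both implications. The easy direction is a direct check that $K_3$ and $P_4$ are $3$-critical. For $K_3$ we have $\chi_s(K_3)\ge\chi(K_3)=3$, a $3$-star-coloring exists trivially, and $K_3-e=P_3$ has $\chi_s=2$. For $P_4=v_1v_2v_3v_4$, any proper $2$-coloring colours the whole path with exactly two colours, so $\chi_s(P_4)\ge 3$; a $3$-star-coloring exists; and deleting any edge of $P_4$ yields a disjoint union of stars, each having star chromatic number at most $2$. Hence both graphs are $3$-critical.

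For the converse, the first ingredient is the characterization that $\chi_s(H)\le 2$ if and only if $H$ is a disjoint union of stars. Indeed, in any proper $2$-coloring every subpath on four vertices is automatically $2$-coloured, so $\chi_s(H)\le 2$ forces $H$ to be bipartite and to contain no $P_4$ subgraph; taking a spanning tree of a component (which, being $P_4$-free, must be a star) and noting that any additional edge would create a $P_4$, each component is a star, and the converse is immediate. Now let $G$ be $3$-critical; since $G$ is connected, we split into two cases according to whether $G$ contains a cycle.

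Suppose $G$ contains a cycle, and pick an edge $e$ lying on a cycle. Then $e$ is not a bridge, so $G-e$ is connected, and by criticality $\chi_s(G-e)\le 2$; by the characterization $G-e$ is a single star $K_{1,m}$ with center $c$ and leaves $\ell_1,\dots,\ell_m$, and since $|V(G)|\ge 3$ we have $m\ge 2$. As $e$ joins two non-adjacent vertices of $K_{1,m}$, it joins two leaves, say $e=\ell_1\ell_2$, so $G$ is the triangle $c\ell_1\ell_2$ together with $m-2$ pendant leaves at $c$. Deleting the edge $c\ell_1$ leaves a connected graph which, if $m\ge 3$, contains the path $\ell_1\ell_2c\ell_3$, forcing $\chi_s(G-c\ell_1)\ge 3$ and contradicting criticality. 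Hence $m=2$ and $G=K_3$.

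Suppose instead $G$ is a tree. Since $\chi_s(G)=3>2$, $G$ is not a star, so it contains a $P_4$; fix a longest path $v_0v_1\cdots v_t$ with $t\ge 3$. Deleting $v_0v_1$ isolates $v_0$ and leaves the subtree $G-v_0$, which by criticality has $\chi_s\le 2$ and is therefore a star; as it contains the path $v_1\cdots v_t$, we get $t\le 3$, so $t=3$. By maximality of the path, every vertex outside $\{v_0,v_1,v_2,v_3\}$ must be a leaf adjacent to $v_1$ or to $v_2$. But if some leaf $w$ were adjacent to $v_1$, then $G-v_0$ would contain the $P_4$ $wv_1v_2v_3$, giving $\chi_s(G-v_0v_1)\ge 3$, a contradiction, and symmetrically no leaf hangs at $v_2$. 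Hence $V(G)=\{v_0,v_1,v_2,v_3\}$ and $G=P_4$. I expect the tree case to be the main obstacle — in particular, reading off the exact structure from a longest path once it is known to have four vertices and ruling out every possible attachment of an extra vertex; the auxiliary characterization of $\chi_s\le 2$ graphs, though routine, is what makes the cycle case short and should be set up carefully.
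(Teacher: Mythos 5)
Your proof is correct, but it takes a genuinely different route from the paper. The paper's argument is shorter and more direct: it first shows (via a lemma) that a connected graph on at least four vertices with no $P_4$ subgraph is a star, so a $3$-critical graph must contain a $P_4$ subgraph $v_1v_2v_3v_4$; then the single observation that deleting any edge \emph{not} among $v_1v_2,v_2v_3,v_3v_4$ leaves that $P_4$ intact (hence $\chi_s(G-e)\geq 3$, contradicting criticality) immediately forces $G$ to have exactly those four vertices and exactly those three edges, with $K_3$ handled separately as the only small case. You instead prove the full characterization ``$\chi_s(H)\leq 2$ iff $H$ is a disjoint union of stars,'' apply it to $G-e$, and split into the cycle case (reconstructing $G$ as a triangle with pendants at the star's center) and the tree case (longest-path analysis). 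Your route is longer but buys more structural information about what $G-e$ must look like, whereas the paper's ``the $P_4$ survives any irrelevant edge deletion'' trick bypasses all case analysis. Two small points to tighten in a write-up: in your auxiliary characterization, the claim that any edge added to a star $K_{1,m}$ creates a $P_4$ fails for $m=2$ (it creates a triangle instead), so you genuinely need the bipartiteness you mentioned to exclude that case; and in the tree case the assertions that $v_0$ is a leaf and that every vertex off the longest path is a leaf adjacent to $v_1$ or $v_2$ deserve a sentence each, though both follow routinely from maximality of the path.
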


\begin{theorem}
  Let $G$ be a  non-complete graph on $n \geq 5$ vertices.  $G$ is $(n-1)$-critical if and only if $G$ is $(I_3, 2K_2)$-free and $G-e$ contains either an $I_3$ or $2K_2$ for every edge $e$ of $G$. 
\end{theorem}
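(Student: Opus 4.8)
The plan is to deduce the theorem from a single structural fact about the star chromatic number in the range $n-2,\,n-1,\,n$, and then translate criticality into a statement about forbidden induced subgraphs. The two ingredients I would establish first, for an arbitrary $n$-vertex graph $H$, are: (i) $\chi_s(H)\le n-1$ if and only if $H\neq K_n$; and (ii) (the key lemma) $\chi_s(H)\le n-2$ if and only if $H$ contains an induced $I_3$ or an induced $2K_2$. For (i): if $H$ is not complete, pick non-adjacent $u,v$, color them alike, and give the remaining $n-2$ vertices pairwise distinct colors; this is proper, and since a $2$-colored $P_4=v_1v_2v_3v_4$ must satisfy $c(v_1)=c(v_3)\neq c(v_2)=c(v_4)$ (two disjoint monochromatic pairs of different colors), no $P_4$ can be $2$-colored when only one pair repeats a color. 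Granting (i) and (ii), the theorem is immediate: for non-complete $G$ on $n\ge 5$ vertices we always have $\chi_s(G)\le n-1$, so $G$ is $(n-1)$-critical exactly when $\chi_s(G)=n-1$ and $\chi_s(G-e)\le n-2$ for every edge $e$; by (i) and (ii) the first condition says $G$ is $(I_3,2K_2)$-free, and the second says $G-e$ contains an induced $I_3$ or $2K_2$ for every $e$.

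So the real content is the lemma (ii). For the ``if'' direction I would write down explicit $(n-2)$-star-colorings. If $\{x,y,z\}$ is independent, color it with one color and all remaining vertices with distinct new colors. If $\{a,b,c,d\}$ induces $2K_2$ with edges $ab$ and $cd$, take color classes $\{a,c\}$ and $\{b,d\}$ (legal, since $ac$ and $bd$ are non-edges) and give the rest distinct new colors. In each case I only need to rule out a $2$-colored $P_4$: in the first construction all repeated pairs carry the same color, so there cannot be two monochromatic pairs of distinct colors; in the second, a $2$-colored $P_4$ would alternate between $\{a,c\}$ and $\{b,d\}$ and hence use three of the cross-pairs $ab,ad,cb,cd$ as edges, whereas only $ab$ and $cd$ are edges.

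For the ``only if'' direction, start from a star coloring of $H$ with $t\le n-2$ colors. A counting argument (since $t\le n-2$) shows that either some color class has size $\ge 3$, which is an independent set and hence an induced $I_3$, or at least two color classes have size exactly $2$, say $\{a,b\}$ and $\{c,d\}$; both pairs are then non-edges. Among the four cross-pairs $ac,ad,bc,bd$ at most two are edges, because any three of them would already form a path on four vertices alternating between the two classes, which would be $2$-colored. Hence $H[\{a,b,c,d\}]$ has at most two edges, and a four-vertex graph with at most two edges either has an independent set of size $3$ (when it has at most one edge, or two edges sharing a vertex) or consists of two disjoint edges, i.e.\ is $2K_2$; either way $H$ contains the required induced subgraph.

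I expect the main obstacle to be exactly this last case analysis — pinning down that two size-$2$ classes together with the star-coloring constraint force an induced $I_3$ or $2K_2$ on those four vertices, via the ``three cross-edges would create a $2$-colored $P_4$'' observation and the small enumeration that follows. A secondary subtlety worth stating carefully is that ``path on four vertices'' means a subgraph, not necessarily an induced one; but because the coloring is proper, any chord of a candidate path forces a third color, so the $2$-coloring analysis may proceed as if such paths were induced, which is what legitimizes the ``alternating between the two classes'' reasoning throughout.
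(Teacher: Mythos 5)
Your proposal is correct and follows essentially the same route as the paper: your key lemma (that $\chi_s(H)\le n-2$ exactly when $H$ contains an induced $I_3$ or $2K_2$) is just the contrapositive form of the paper's Lemma~\ref{lem-free}, proved by the same explicit colorings in one direction and the same pigeonhole-plus-cross-edge count (at most two edges between two size-$2$ classes, else a bicolored $P_4$) in the other, after which the criticality statement follows by applying the lemma to $G$ and to each $G-e$. Your write-up is somewhat more careful than the paper's in verifying the star property of the constructed colorings and in the final four-vertex case analysis, but the argument is the same.
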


\begin{theorem}
 
Let $G$ be a graph on $n \geq 5$ vertices and $G$ contains either an $I_3$ or $2K_2$. $G$ is $(n-2)$-critical  if and only if
$G$ is $(I_4, 2K_2+K_1, P_3+P_2)$-free and $G-e$ contains one of  $I_4$ or $2K_2+K_1$ or $P_3+P_2$ as an induced subgraph for every edge $e$ of $G$. 
\end{theorem}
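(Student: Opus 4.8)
The plan is to follow the pattern of Theorem~2 and reduce the statement to a forbidden-subgraph description of the graphs with $\chi_s \le n-3$. Since $G$ contains an induced $I_3$ or $2K_2$ it is already star colorable with $n-2$ colors: for an independent triple $\{x,y,z\}$ use one color on those three vertices and a private color on every other vertex; for an induced $2K_2$ with edges $ab,cd$ use one color on $\{a,c\}$, one on $\{b,d\}$, and a private color on every remaining vertex. In both cases only one or two color classes are enlarged, so no bichromatic $P_4$ is created and $\chi_s(G)\le n-2$. Hence $G$ is $(n-2)$-critical exactly when (i)~$\chi_s(G)=n-2$, i.e.\ $\chi_s(G)\not\le n-3$, and (ii)~$\chi_s(G-e)\le n-3$ for every edge $e$. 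Everything therefore reduces to deciding, for an $n$-vertex graph $H$, when $\chi_s(H)\le n-3$.

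One direction is supplied by explicit colorings. If $H$ has an induced $I_4$, color its four vertices alike and every other vertex privately: the only large color class has size $4$, and since a bichromatic $P_4$ requires two classes of size at least $2$, this is a star coloring with $n-3$ colors. If $H$ has an induced $2K_2+K_1$ with edges $ab,cd$ and isolated vertex $e$, put the independent set $\{a,c,e\}$ on one color and $\{b,d\}$ on another; a bichromatic $P_4$ would have to use both of these colors, so it would be a $P_4$ on $\{b,d\}$ together with two of $\{a,c,e\}$, whose induced subgraph lies inside a $2K_2$ and contains no $P_4$. The induced $P_3+P_2$ case (path $abc$, edge $de$) is identical with classes $\{a,c,d\}$ and $\{b,e\}$.

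For the converse, start from a star coloring of $H$ with $k\le n-3$ colors, so that $\sum_i(|C_i|-1)\ge 3$. A color class of size at least $4$ is an induced $I_4$, so assume every class has size at most $3$; then either some class has size $3$ and there is a further nontrivial class, or there are at least three classes of size $2$. The absence of a bichromatic $P_4$ constrains the edges between two color classes strongly: between two size-$2$ classes there are at most two cross edges, and no two vertices of a size-$3$ class send three edges to a common size-$2$ class. Running case by case through the possible cross-edge patterns (none, a matching, two edges meeting at a vertex, a star, and so on), one checks that at most five suitably chosen vertices of a size-$3$ class and a second nontrivial class always induce an $I_4$, a $2K_2+K_1$, or a $P_3+P_2$; the ``three size-$2$ classes'' case is treated the same way and is the more delicate of the two.

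The step I expect to be the main obstacle is making item~(ii) precise when $H=G-e$ is \emph{disconnected}. For proper coloring a critical graph is automatically bridgeless, since deleting a bridge never decreases the chromatic number; for star coloring this fails, because a bridge can be the central edge of a $P_4$ and hence can strictly raise $\chi_s$. Consequently a connected graph can be $(n-2)$-critical even though $G-e$, for a cut edge $e$, breaks into pieces each with star chromatic number strictly below $\chi_s(G)$. The crux is therefore to control these disconnecting deletions and to verify that $\chi_s(G-e)\le n-3$ remains exactly equivalent to $G-e$ containing one of $I_4$, $2K_2+K_1$, $P_3+P_2$ in that regime; this analysis, rather than the single-coloring arguments above, is where the real content of the theorem lies.
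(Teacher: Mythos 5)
Your route is essentially the paper's: the theorem is nothing more than the forbidden-subgraph characterization of graphs with star chromatic number $n-2$ (Lemma~\ref{lem-free-n-2}) applied once to $G$ and once to each $G-e$, and your explicit colorings for $I_4$, $2K_2+K_1$ and $P_3+P_2$, as well as the class-size trichotomy (a class of size $4$; a size-$3$ class plus a second nontrivial class; three size-$2$ classes), are exactly the paper's. Where you diverge is in what you treat as the hard part, and there you have it backwards. The worry about disconnected $G-e$ is vacuous: nothing in either direction of the characterization uses connectivity --- the explicit colorings work on disconnected graphs, and the converse argument only inspects color classes of a star coloring with at most $n-3$ colors --- so the equivalence ``$\chi_s(G-e)\le n-3$ iff $G-e$ contains one of the three graphs'' needs no separate treatment when $e$ is a bridge (indeed, in your connectivity-free formulation you do not even need the easy observation that $G-e$ still contains an $I_3$ or $2K_2$). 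By contrast, the step that actually carries the content --- showing that a size-$3$ class together with another nontrivial class, or three size-$2$ classes, forces an induced $I_4$, $2K_2+K_1$ or $P_3+P_2$ --- is left as ``one checks.'' The paper settles the first pattern cleanly by noting that in a star coloring the union of two color classes induces a disjoint union of stars, so the five relevant vertices either induce a disconnected graph (which is then one of the three forbidden graphs, or contains an $I_4$) or a connected bipartite graph with parts of sizes $3$ and $2$, which contains a bicolored $P_4$; the three-size-$2$-classes pattern is then a short enumeration using the ``at most two cross edges between two classes'' bound you state. Writing out that enumeration, not the bridge discussion, is what your proposal still owes.
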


\begin{theorem}
  Let $G$ be a graph on $n \geq 5$ vertices and $m$ edges. If $G$ is $(n-1)$-critical then
 $$ \frac{n^2-n-n \sqrt{n}}{2} < m\leq(n-1)(n-2)/2$$

\end{theorem}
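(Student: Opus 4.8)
The plan is to pass to the complement $\overline{G}$ and reduce the claim to a classical extremal estimate. Set $\bar{m}=\binom n2-m=|E(\overline{G})|$. Since $\chi_s(K_n)=n$, an $(n-1)$-critical graph is non-complete, so Theorem~2 applies: $G$ is $(I_3,2K_2)$-free and, for every edge $e$ of $G$, the graph $G-e$ contains an $I_3$ or an induced $2K_2$. Passing to complements and using $\overline{I_3}=K_3$ and $\overline{2K_2}=C_4$, this says exactly that $\overline{G}$ has no $K_3$ and no induced $C_4$, while $\overline{G}+e$ contains a $K_3$ or a $C_4$ for every non-edge $e$ of $\overline{G}$; in other words $\overline{G}$ is an edge-maximal $\{K_3,C_4\}$-free graph. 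Note that, being triangle-free, $\overline{G}$ contains no $C_4$ as a subgraph either, so any two of its vertices have at most one common neighbour.

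For the upper bound $m\le(n-1)(n-2)/2$ it suffices to prove $\bar m\ge n-1$, and for this I claim $\overline{G}$ is connected. Indeed, if $\overline{G}$ had two components and $u,v$ lay in different ones, then $\overline{G}+uv$ would contain no $K_3$ ($u,v$ have no common neighbour) and no $C_4$ (a $4$-cycle through $uv$ would force two adjacent vertices into distinct components); hence $G-uv$ would contain neither an $I_3$ nor a $2K_2$, contradicting Theorem~2. A connected graph on $n$ vertices has at least $n-1$ edges, so $\bar m\ge n-1$ and $m=\binom n2-\bar m\le\binom n2-(n-1)=(n-1)(n-2)/2$.

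For the lower bound I bound $\bar m$ from above by counting cherries (paths on three vertices) of $\overline{G}$. A vertex $v$ of degree $d_v$ is the centre of $\binom{d_v}2$ cherries; since $\overline{G}$ is triangle-free the two endpoints of any cherry are non-adjacent, and since $\overline{G}$ has no $C_4$ each non-adjacent pair is the endpoint set of at most one cherry. Hence
$$\sum_{v\in V(\overline{G})}\binom{d_v}2\ \le\ \binom n2-\bar m .$$
By convexity of $x\mapsto\binom x2$ and $\sum_v d_v=2\bar m$, the left-hand side is at least $n\binom{2\bar m/n}2=\tfrac{2\bar m^{2}}{n}-\bar m$, so $\tfrac{2\bar m^{2}}{n}\le\binom n2$, which yields $\bar m\le\tfrac{n\sqrt{n-1}}2<\tfrac{n\sqrt n}2$. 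Therefore $m=\binom n2-\bar m>\binom n2-\tfrac{n\sqrt n}2=\tfrac{n^{2}-n-n\sqrt n}{2}$, as required.

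The two inequality manipulations are routine; the step needing care is getting the constant in the cherry count right. Counting cherry-endpoints over all $\binom n2$ pairs only gives $\bar m\lesssim\tfrac n4+\tfrac n2\sqrt{n-3/4}$, which is just slightly too weak to beat $\tfrac{n\sqrt n}2$; the extra room comes precisely from the observation that in a triangle-free graph a cherry joins only non-adjacent vertices, of which there are $\binom n2-\bar m$. It is also worth noting $n=5$ as a base case (e.g.\ $G=C_5=\overline{C_5}$) and that the complement of a double star is $(n-1)$-critical with $m=(n-1)(n-2)/2$, so that the upper bound is attained and neither bound is vacuous.
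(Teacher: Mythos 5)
Your proof is correct, and it differs from the paper's in an interesting way on both halves. For the lower bound the paper makes the same complement reduction you do ($G$ being $(I_3,2K_2)$-free forces $\overline{G}$ to be $(C_3,C_4)$-free), but then simply cites the textbook extremal bound $\bar m\le \tfrac{n\sqrt{n-1}}{2}$ for such graphs; you instead reprove that bound from scratch by the cherry-counting argument, and your refinement (counting cherry endpoints only over the $\binom n2-\bar m$ non-adjacent pairs, which is legitimate by triangle-freeness) is exactly what recovers the constant $\tfrac{n\sqrt{n-1}}{2}$ rather than the weaker Reiman-type bound, so the strict inequality $m>\tfrac{n^2-n-n\sqrt n}{2}$ goes through. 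For the upper bound your route is genuinely different and cleaner: the paper proves, by induction on $n$ together with two lemmas about vertices of degree $n-1$ and $n-2$, that a graph with $\chi_s(G)=n-1$ and more than $(n-1)(n-2)/2$ edges cannot be critical; you instead read criticality in the complement as edge-maximality of a $\{K_3,\text{induced }C_4\}$-free graph, observe that such a maximal graph must be connected (adding an edge between components creates neither a triangle nor any $C_4$ through the new edge, and any other forbidden subgraph would already sit in $\overline{G}$), and conclude $\bar m\ge n-1$ directly. This avoids the induction entirely and exposes the structural reason for the bound; the paper's inductive argument, by contrast, yields the slightly stronger statement that \emph{every} graph with $\chi_s=n-1$ and too many edges fails criticality, not only via disconnectedness of the complement. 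Your closing remarks are also consistent with the paper: the horn graph $H_n$, which the paper uses to show tightness of the upper bound, is precisely the complement of a double star, matching your example.
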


\begin{theorem}
  Let $G$ be a graph on $n \geq 5$ vertices and $m$ edges. If $G$ is $(n-2)$-critical then
 $$ n(n-3)/6 \leq m\leq n(n-3)/2$$

\end{theorem}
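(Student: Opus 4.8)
The plan is to prove the two inequalities separately, working throughout with the complement $\overline G$, since $m\ge n(n-3)/6\iff |E(\overline G)|\le n^2/3$ and $m\le n(n-3)/2\iff |E(\overline G)|\ge n$.

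\smallskip
\noindent\emph{Lower bound (the routine part).} Since $\chi_s(G)=n-2$, the graph $G$ cannot contain four pairwise non-adjacent vertices: an induced $I_4$ could be given a single colour and all remaining $n-4$ vertices pairwise distinct colours, and then the union of any two colour classes is a single star $K_{1,\le 4}$ or a single edge, so $\chi_s(G)\le n-3$, a contradiction. Hence $\overline G$ is $K_4$-free, and Tur\'an's theorem gives $|E(\overline G)|\le (1-\tfrac13)\tfrac{n^2}{2}=\tfrac{n^2}{3}$, whence $m=\binom n2-|E(\overline G)|\ge \binom n2-\tfrac{n^2}{3}=\tfrac{n(n-3)}{6}$. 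This direction uses only $\chi_s(G)=n-2$, not criticality.

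\smallskip
\noindent\emph{Upper bound.} It suffices to show $\delta(\overline G)\ge 2$, i.e.\ every vertex of $G$ has at least two non-neighbours, for then $|E(\overline G)|=\tfrac12\sum_x\deg_{\overline G}(x)\ge n$. The starting point is a structural consequence of criticality: for every edge $e=uv$ we have $\chi_s(G-e)\le n-3$, so $G-e$ contains an induced $I_4$, $2K_2+K_1$, or $P_3+P_2$ (the characterisation of $n$-vertex graphs with star chromatic number at most $n-3$, which underlies Theorem~3). As $G$ contains none of these, the copy uses the edge $uv$; and since in each of $I_4$, $2K_2+K_1$, $P_3+P_2$ every vertex has at least two non-neighbours inside the graph, both $u$ and $v$ have a non-neighbour in $G$. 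In particular $G$ has no universal vertex. Refining the same count: if a vertex $v$ had exactly one non-neighbour $u$, then for each neighbour $w$ of $v$ the non-neighbours of $v$ inside the copy in $G-vw$ can only be $w$ and (possibly) $u$, so $v$ has at most two and hence exactly two non-neighbours there, forcing the copy to be a $P_3+P_2$ with $v$ at the centre of the $P_3$ and $\{w,u\}$ the $P_2$; thus $wu\in E(G)$ for every $w\in W:=V(G)\setminus\{u,v\}$, so $u$'s only non-neighbour is $v$. Consequently $u,v$ are non-adjacent twins each adjacent to all of $W$, i.e.\ $G$ is the join $\overline{K_2}\vee G[W]$.

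\smallskip
\noindent The main work is to rule this out. Using the easily checked identity $\chi_s(\overline{K_2}\vee H)=\min\{|V(H)|+1,\ \chi_s(H)+2\}$ together with $\chi_s(G)=n-2$, one gets $\chi_s(G[W])=|W|-2$. Fix $w_0\in W$ and let $H_0$ be $G[W]$ together with a new vertex adjacent to $W\setminus\{w_0\}$; note $H_0=(G-uw_0)[W\cup\{u\}]$. Since $G[W]$ is $(I_4,2K_2+K_1,P_3+P_2)$-free and a vertex in any induced copy of these has at least two non-neighbours inside it, no such copy in $H_0$ can contain the new vertex (which has the single non-neighbour $w_0$); hence $H_0$ is still $(I_4,2K_2+K_1,P_3+P_2)$-free, so $\chi_s(H_0)\ge |V(H_0)|-2=n-3$. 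Now take any star colouring of $G-uw_0$. Since $v$ is still adjacent to all of $W$, its colour class lies in $\{u,v\}$. If it equals $\{u,v\}$, then for every colour class $C\subseteq W$ with $|C|\ge 2$ the vertex $v$ is the centre of a star in the union $\{u,v\}\cup C$, so $u$ is adjacent to no vertex of $C$ — impossible, as $u$ misses only $w_0$ in $W$; hence all $W$-classes are singletons and at least $|W|+1=n-1$ colours are used. If it equals $\{v\}$, the restriction to $W\cup\{u\}$ is a star colouring of $H_0$, using at least $n-3$ colours, and $v$ contributes one more, for at least $n-2$. In either case $\chi_s(G-uw_0)\ge n-2=\chi_s(G)$, contradicting criticality (the edge $uw_0$ exists). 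This contradiction yields $\delta(\overline G)\ge 2$, hence $m\le\binom n2-n=n(n-3)/2$.

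\smallskip
\noindent I expect the crux to be the last paragraph — in particular the two observations that the twin/join reduction really does occur whenever $\delta(\overline G)=1$, and that adjoining a vertex with a single non-neighbour preserves $(I_4,2K_2+K_1,P_3+P_2)$-freeness — together with getting the degree-count bookkeeping exactly right (so that the relevant induced copies are forced into the claimed shapes), especially for small $n$ where $W$ is tiny.
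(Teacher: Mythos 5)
Your proposal is correct, and its overall skeleton matches the paper's: the lower bound is identical (star chromatic number $n-2$ forces $I_4$-freeness, so $\overline{G}$ is $K_4$-free and Tur\'an gives $|E(\overline{G})|\le n^2/3$), and the upper bound in both cases comes down to showing that an $(n-2)$-critical graph has no vertex of degree $n-1$ or $n-2$, i.e.\ $\Delta(G)\le n-3$, which yields $m\le n(n-3)/2$. Where you diverge is in how that degree bound is established. The paper argues non-criticality directly: for a vertex $w$ of degree $n-2$ or $n-1$ it picks a suitable edge $e$ at $w$ (choosing the other endpoint so that $w$'s two non-neighbours in $G-e$ are non-adjacent when possible) and shows $G-e$ is still $(I_4,2K_2+K_1,P_3+P_2)$-free, hence $\chi_s(G-e)=n-2$ by the characterization. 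You instead exploit criticality up front: every edge deletion must create one of the three configurations containing both endpoints, and since each vertex of these configurations has at least two internal non-neighbours, a vertex $v$ with a single non-neighbour $u$ forces every created copy to be a $P_3+P_2$ with $\{u,w\}$ its $P_2$, giving the non-adjacent-twin join $G=\overline{K_2}\vee G[W]$; you then kill this case not by freeness of $G-uw_0$ but by a direct colouring lower bound, splitting on the colour class of $v$ and using the auxiliary graph $H_0$ (whose freeness is easy because $u$ has only one non-neighbour there). This buys you an explicit treatment of exactly the delicate point the paper glosses over with ``similarly/easily'' (the possibility that the high-degree vertex is the centre of the $P_3$ in a $P_3+P_2$), at the cost of some extra machinery: the join identity $\chi_s(\overline{K_2}\vee H)=\min\{|V(H)|+1,\chi_s(H)+2\}$ is stated but in fact never used afterwards, and when $n=5$ the graph $H_0$ has only four vertices, so the cited characterization (stated for at least five vertices) does not literally apply -- though the bound you need there, $\chi_s(H_0)\ge 2$, is immediate since $H_0$ has an edge. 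These are cosmetic, not gaps; the argument is sound.
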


\section{Notation and terminology}
In this section, we review some basic definitions and notations used in this paper. 
We use $[k]$ to denote the set $\{1,2, \ldots,k\}$. If $f:V(G) \rightarrow[k]$ is a $k$-star coloring of $G$, then we use $f^{-1}(i)$ to denote the subset of vertices of $G$ which are assigned the color $i$. 
A graph $H$ is called a \emph{subgraph}
of a graph $G$, if $V(H) \subseteq V(G)$ and $E(H) \subseteq E(G)$. 
A subgraph $H$ of $G$ is called \emph{induced subgraph} of $G$, if for any two vertices $u,v$  in $H$, $u$ and $v$ are adjacent in $H$ if they are adjacent in $G$. 
Given a graph $G$ and a vertex $v \in V(G)$, we use $G -v$ to denote the graph obtained by removing $v$ and all edges incident on $v$ from $G$. 
For a vertex subset  $X \subseteq V(G)$, $G - X$ denotes the graph obtained by deleting vertices of $X$ and all edges incident on $X$ from $G$. Similarly, if $e \in E(G)$,  we use $G -e$ to denote the graph obtained by removing the edge $e$ from $G$. 
The open neighborhood of a vertex $v$, denoted
$N(v)$, is the set
of vertices adjacent to $v$ and the set $N[v] = N(v) \cup \{v\}$ denote the closed neighborhood of $v$. 
The size of the set $N(v)$ is called the \emph {degree} of $v$ in G, denoted as $deg(v)$. The maximum degree of a graph $G$, denoted by $\Delta(G)$.

A set $I \subseteq V(G)$ of pairwise nonadjacent vertices is called an \emph{independent set}. We use $I_k$ to denote the independent set of size $k$.  A subset $C$ of $V(G)$ is called a \emph{clique} if any two vertices in $C$ are adjacent.
A \emph{star graph} is a tree with at most one vertex with degree greater than one. We use $P_{n}$ and $K_n$ respectively to denote a path and a complete graph  on $n$ vertices. The disjoint union of two graphs $G$ and $H$ is denoted as $G +H$. A graph $G$ is $H$-free if it does not contain $H$ as an induced subgraph. A graph $G$ is $2K_2$-free if it does not contain two independent edges as an induced subgraph. A graph $G$ is $(P_3+P_2)$-free if it does not contain any induced subgraph which is a disjoint union of $P_3$ and $P_2$. For more details on standard graph-theoretic
notation and terminology, we refer the reader to the textbook~\cite{diestel2005graph}. The graphs which appear commonly in this paper are shown in Fig~\ref{fig-1}.
\section{$3$-critical graphs}
It is easy to see that $K_2$ is the only $2$-critical connected graph and $K_n$ is the only $n$-critical connected graph. 
In this section, we focus on  graphs that are $3$-critical.

\begin{figure}[t]
	\begin{subfigure}[t]{.20\textwidth}
	\caption{$I_3$}
		$$
			\includegraphics[trim=9.8cm 14cm 9cm 12cm, clip=true, scale=1]{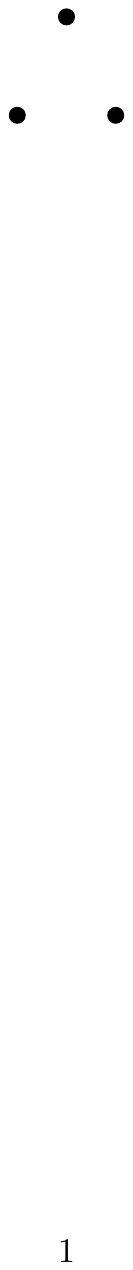}
		$$
		
	\end{subfigure}
	\begin{subfigure}[t]{.19\textwidth}
	\caption{$2K_2$}
		$$
			\includegraphics[trim=9.5cm 14cm 9cm 12cm, clip=true, scale=1]{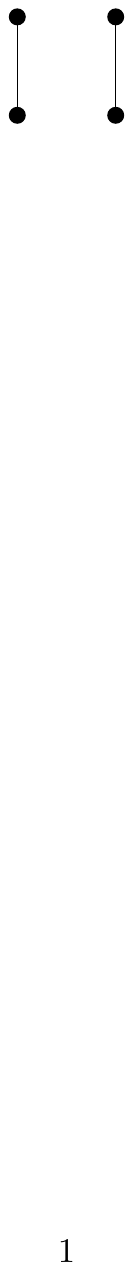}
		$$
		
		\end{subfigure}
		\begin{subfigure}[t]{0.19\textwidth}
		\caption{$I_4$}
			$$
				\includegraphics[trim=9.4cm 14cm 9cm 12cm, clip=true, scale=1]{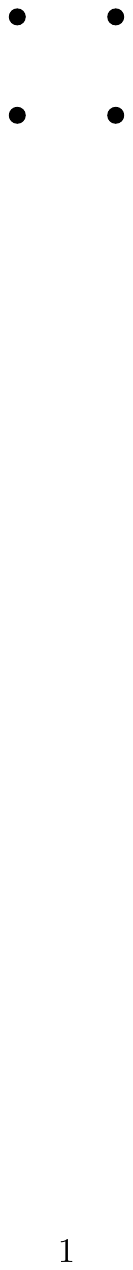}
			$$
		
		\end{subfigure}
		\begin{subfigure}[t]{0.19\textwidth}
		\caption{$2K_2+K_1$}
			$$
				\includegraphics[trim=9.4cm 14cm 9cm 12cm, clip=true, scale=1]{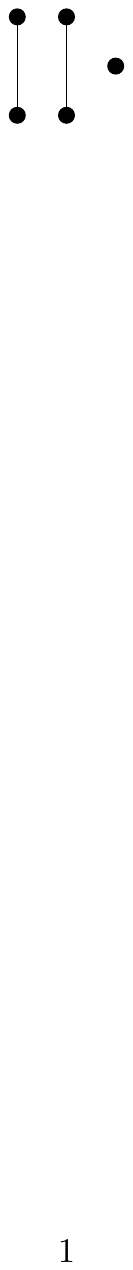}
			$$
		
		\end{subfigure}
			\begin{subfigure}[t]{0.19\textwidth}
		\caption{$P_3+P_2$}
			$$
				\includegraphics[trim=9.4cm 13.5cm 9cm 12cm, clip=true, scale=1]{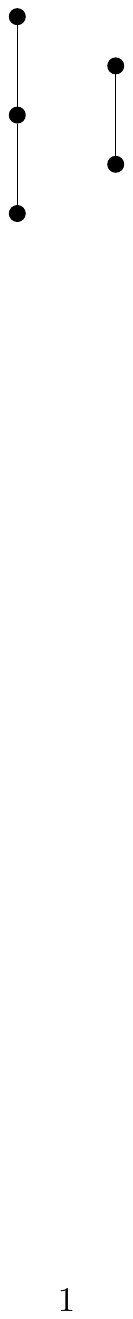}
			$$
		
		\end{subfigure}
	\caption{(a) $I_3$: Independent set of size three; (b) $2K_2$: Disjoint union of two complete graphs on two vertices; (c) $I_4$: Independent set of size four; (d) $2K_2+K_1$: Disjoint union of two complete graphs on two vertices and one complete graph on one vertex; and (e) $P_3+P_2$: Disjoint union of a path on three vertices and a path on two vertices.}
\label{fig-1}
\end{figure}

It is known that a graph is  $3$-critical with respect to proper coloring if and only if  it is an odd cycle. Here, we show that the class of $3$-critical graphs with respect to star coloring are only $K_3$ and $P_4$.
$K_3$ is the only $3$-critical graph having at most three vertices. Next, we show that $P_4$ is the only $3$-critical graph having at least four vertices. 
\begin{lemma}\label{lem-p4}
 Let $G$ be a connected graph having at least four vertices. $G$ is a star graph if and only if $G$ does not contain $P_4$ as a subgraph.
\end{lemma}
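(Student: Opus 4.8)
The plan is to prove both implications directly, the key observation being that here ``$G$ contains $P_4$ as a subgraph'' is the \emph{non-induced} notion: a $P_4$ is just four distinct vertices $p_1,p_2,p_3,p_4$ such that the three edges $p_1p_2$, $p_2p_3$, $p_3p_4$ are present in $G$ (extra edges among them being irrelevant).

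For the forward direction, suppose $G$ is a star graph on $n \ge 4$ vertices. Being a connected tree with at most one vertex of degree exceeding one, $G$ must be $K_{1,n-1}$, with center $c$ and $n-1$ leaves forming an independent set. Every path in $G$ alternates between $\{c\}$ and the leaf set, so it passes through $c$ at most once and hence has at most three vertices; in particular $G$ has no $P_4$ subgraph.

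For the converse, assume $G$ is connected, $n \ge 4$, and contains no $P_4$ subgraph. First I would argue that $G$ is a tree: any cycle of length at least four contains a $P_4$ as a subgraph, so the only cycles $G$ could contain are triangles; but if $a,b,c$ spanned a triangle, then, $G$ being connected with $n \ge 4$, some vertex $w \notin \{a,b,c\}$ would be adjacent to one of them, say to $a$, and then $w\,a\,b\,c$ would be a $P_4$ subgraph --- a contradiction. Hence $G$ is acyclic, and being connected it is a tree. Now take a longest path $P = v_0v_1\cdots v_\ell$ in $G$. Since $G$ has no $P_4$ subgraph, $\ell \le 2$, while $\ell \le 1$ is impossible for a connected graph on $n \ge 4$ vertices; so $\ell = 2$ and $P = v_0v_1v_2$. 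Finally I would show every vertex $w \ne v_1$ is adjacent to $v_1$: the unique $w$--$v_1$ path in the tree, say $v_1 = u_0, u_1, \dots, u_t = w$, cannot have length $t \ge 2$, since then one of the edges $v_0v_1$, $v_1v_2$ (namely the one whose non-$v_1$ endpoint is not $u_1$) together with $v_1u_1$ and $u_1u_2$ would give a path on four distinct vertices (using acyclicity to see these vertices are distinct), contradicting $\ell = 2$; so $t = 1$. Therefore $v_1$ is adjacent to all $n-1$ remaining vertices, so $G = K_{1,n-1}$ is a star graph.

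I expect the routine case checks to be the only mild nuisance: one must keep track of the hypothesis $n \ge 4$ to exclude the degenerate longest paths with $\ell \in \{0,1\}$, and one must consistently invoke the \emph{non-induced} notion of subgraph --- it is exactly this that lets the triangle-plus-pendant and path-extension arguments actually produce a $P_4$.
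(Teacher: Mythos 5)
Your proof is correct, but it follows a different route from the paper's. The paper argues the converse directly from the definition of a star: if $G$ is not a star it has two vertices $u,v$ of degree at least two, and a short case analysis on whether $N(u)\cap N(v)$ is empty or not (using connectedness in the former case) produces a $P_4$ subgraph in either case. You instead first rule out all cycles (cycles of length at least four contain a $P_4$ outright, and a triangle plus a pendant vertex, which exists since $n\ge 4$ and $G$ is connected, also yields one), conclude $G$ is a tree, and then run a longest-path argument: the longest path has exactly three vertices, and the tree-path from its middle vertex $v_1$ to any other vertex must have length one, since a longer tree-path extended by the appropriate edge of the longest path would create four distinct vertices spanning a $P_4$ (distinctness guaranteed by acyclicity). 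Your argument is longer but leaves fewer details implicit --- in particular, the paper's case $N(u)\cap N(v)\neq\emptyset$ is stated without exhibiting the $P_4$, and quietly needs the vertex count when $u,v$ and a common neighbor form a triangle, whereas your triangle-plus-pendant step handles exactly this configuration explicitly. The paper's proof buys brevity; yours buys a self-contained verification and makes the non-induced reading of ``contains $P_4$ as a subgraph'' do visible work, which is the reading the rest of the paper relies on in Theorem~\ref{thm-3-critical}.
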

\begin{proof}
 If $G$ is a star graph then at most one vertex has degree greater than one,  hence $G$ does not contain any $P_4$ as a subgraph. For the other direction, let $G$ be a graph without any $P_4$ as a subgraph. Suppose $G$ is not a star graph then $G$ contains two vertices $u$ and $v$ having degree at least two. If $N(u) \cap N(v) \neq \emptyset$ then $G$ contains $P_4$ as subgraph. If $N(u) \cap N(v) = \emptyset$ then as $G$ is connected there is a path between $u$ and $v$ of length at least three, hence $G$ contains $P_4$ as a subgraph, which is a contradiction. Therefore $G$ is a star graph.
\end{proof}

\begin{theorem}\label{thm-3-critical}
Let $G$ be a connected graph having at least four vertices. $G$ is $3$-critical if and only if $G=P_4$.
\end{theorem}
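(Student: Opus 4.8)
The plan is to prove the two directions separately, using Lemma~\ref{lem-p4} to translate the structural hypothesis into a statement about star graphs.

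\medskip

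\noindent\textbf{Forward direction.} Suppose $G$ is $3$-critical on $n \ge 4$ vertices; I want to show $G = P_4$. First I would pin down $\chi_s(P_4)$: a path on four vertices cannot be properly $2$-colored without $2$-coloring the whole $P_4$, so $\chi_s(P_4) = 3$, and conversely for any $G$ with $\chi_s(G) = 3$ one has $\chi_s(G) \ge 3 > 2$, which by the fact that $2$-star-colorable graphs are exactly the star forests (every $P_4$ is forbidden, i.e.\ $G$ is $P_4$-subgraph-free, i.e.\ by Lemma~\ref{lem-p4} a disjoint union of stars) forces $G$ to contain $P_4$ as a subgraph. Now I use criticality: pick an edge $e$ lying on some $P_4$ of $G$. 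Then $\chi_s(G-e) \le 2$, so by Lemma~\ref{lem-p4} $G - e$ is a star graph (disjoint union of stars, but $G-e$ may be disconnected). The main work is then combinatorial: $G$ is connected, contains a $P_4$, and deleting a single well-chosen edge makes it a star forest. I would argue that this pins $G$ down completely — if $G$ had five or more vertices, or had any vertex of degree $\ge 3$, or had a $P_5$, then no single edge deletion could destroy all paths on four vertices while leaving a star forest. Concretely: if $G - e$ is a star forest with components $S_1, \dots, S_t$, then every $P_4$ of $G$ must use $e$; since $G$ is connected, $t \le 2$, and if $t = 2$ then $e$ joins the two centers or a center to a leaf, and one checks that to avoid an induced/sub-$P_4$ avoiding $e$ each $S_i$ must be a single vertex or a single edge, giving at most $4$ vertices; the case $t=1$ (so $G$ is a star plus one extra edge) is handled similarly and also collapses to $n \le 4$. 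Enumerating the finitely many graphs on exactly four vertices that are star-forest-plus-an-edge and connected and contain $P_4$, the only one that is actually $3$-critical (every edge deletion drops $\chi_s$) is $P_4$ itself — one verifies $K_3$ is excluded by $n \ge 4$, the paw, $K_{1,3}$, and $C_4$ either have $\chi_s > 3$ or fail criticality on some edge.

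\medskip

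\noindent\textbf{Reverse direction.} Suppose $G = P_4$ with vertices $v_1 v_2 v_3 v_4$. I must show $\chi_s(P_4) = 3$ and $\chi_s(P_4 - e) \le 2$ for each of the three edges $e$. For the first, exhibit a proper $3$-coloring with no $2$-colored $P_4$ (e.g.\ colors $1,2,1,3$), and argue $\chi_s(P_4) \ge 3$ because any proper $2$-coloring of $P_4$ colors the unique $P_4$ with two colors. For the second, note $P_4 - v_2v_3 = 2K_2$ is a star forest so $\chi_s = 2$, and $P_4 - v_1v_2 = P_3 + K_1$ is also a star forest, hence $\chi_s = 2$; by symmetry $P_4 - v_3v_4$ likewise. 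So every edge is critical and $G = P_4$ is $3$-critical.

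\medskip

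\noindent\textbf{Main obstacle.} The delicate part is the forward direction's case analysis showing that ``connected, contains $P_4$ as a subgraph, and some single edge deletion yields a star forest'' already forces $n = 4$ and in fact $G \cong P_4$. One must be careful that ``contains $P_4$ as a subgraph'' (not necessarily induced) is the right notion coming out of Lemma~\ref{lem-p4}, and that after deleting $e$ the graph can break into two components, so the enumeration of star-forest-plus-one-edge graphs must be done cleanly; ruling out larger $G$ hinges on observing that a star forest with an added edge still has all but boundedly many vertices of degree $1$ attached to a bounded ``core,'' so long induced paths persist unless $n$ is small.
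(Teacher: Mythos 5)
Your reverse direction and the opening step of your forward direction (using Lemma~\ref{lem-p4} to conclude that a $3$-critical $G$ must contain a $P_4$ subgraph) match the paper. The core of your forward direction, however, has a genuine gap: you reduce everything to the claim that a connected graph containing a $P_4$ subgraph, in which \emph{one} well-chosen edge deletion leaves a star forest, must have at most four vertices (and no vertex of degree $\ge 3$, no $P_5$). That claim is false. Take $S_1=K_{1,3}$ with center $c$ and leaves $a,b,d$, take $S_2=K_2$ on $\{x,y\}$, and let $G$ be their disjoint union plus the edge $e=ax$. Then $G$ is connected, has six vertices, has a vertex of degree three, contains a $P_5$ (namely $b,c,a,x,y$), and $G-e$ is a star forest, so every $P_4$ of $G$ uses $e$; yet $S_1$ is neither a single vertex nor a single edge. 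So the constraint extracted from deleting a single edge does not bound $|V(G)|$ at all, and your case analysis for $t=2$ (and similarly $t=1$) cannot go through as described. What actually excludes such graphs is criticality with respect to the \emph{other} edges: in the example, $G-cb$ still contains the $P_4$ $c,a,x,y$, so $\chi_s(G-cb)\ge 3$ and $G$ is not critical.

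This is precisely how the paper argues, and it avoids any enumeration: fix one $P_4$, say $v_1v_2v_3v_4$, in the $3$-critical graph $G$. Any edge incident to a vertex outside $\{v_1,v_2,v_3,v_4\}$, and more generally any edge other than $v_1v_2,v_2v_3,v_3v_4$, can be deleted while the fixed $P_4$ survives, giving $\chi_s(G-e)\ge 3=\chi_s(G)$ and contradicting criticality; hence $V(G)$ consists exactly of those four vertices and $E(G)$ of those three edges, i.e.\ $G=P_4$. Your plan can be repaired by replacing the single-edge structural claim with this all-edges argument (equivalently, by using that criticality forces $G-e'$ to be $P_4$-free for \emph{every} edge $e'$), but as written the decisive step rests on a false statement. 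A minor further point: ``$P_4$-subgraph-free iff star forest'' is not literally Lemma~\ref{lem-p4}, which assumes connectivity and at least four vertices (a triangle component is $P_4$-free but not a star); this is harmless here because a $2$-star-colorable graph is in particular properly $2$-colorable, but it should be said explicitly.
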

\begin{proof}
 Clearly if $G=P_4$ then $\chi_s(G)=3$ and $\chi_s(G-e)\leq 2$ for every $e \in E(G)$. For the other direction, let $G$ be a $3$-critical graph, then $G$ contains $P_4$ as a subgraph.  Otherwise  from Lemma~\ref{lem-p4}, $G$ is a star graph and $\chi_s(G)=2$, which is a contradiction.  Let $v_1,v_2,v_3,v_4$ be a $P_4$ in $G$. If $G$ is not $P_4$, then it is $P_4$ with some additional vertices and edges. If $G$ has a vertex $u$ other than $v_1$, $v_2$, $v_3$, and $v_4$, then after deleting any edge $e$ incident on $u$ from $G$, it still contains $P_4$ as a subgraph, that is $\chi_s(G-e)=3$, which is a contradiction. Therefore, $G$ contains exactly four vertices. If $G$ contains an edge $e$ other than $v_1v_2$, $v_2v_3$, and $v_3v_4$, then again after deleting the edge $e$ from $G$, it still contains $P_4$ as a subgraph,  which is a contradiction. Therefore, $G$ contains exactly four vertices $\{v_1,v_2,v_3,v_4\}$ and three edges $\{v_1v_2, v_2v_3, v_3v_4\}$. Hence, the only four vertex $3$-critical graph is $P_4$.
\end{proof}

\section{$(n-1)$-critical graphs}\label{sec:n-1}
First, we show that there does not exist any $(n-1)$-critical graph under proper coloring. Later, we show the existence of $(n-1)$-critical graphs with respect to star coloring. 

\begin{lemma}
 Let $G$ be a graph with $\chi(G)=n-1$ then $G$ is not $(n-1)$-critical.
\end{lemma}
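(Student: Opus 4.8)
The plan is to argue by contradiction: assume $\chi(G)=n-1$ and that $G$ is $(n-1)$-critical (with respect to proper coloring), and then produce an edge whose deletion does not lower the chromatic number. The first step is to pin down the structure of $G$. I would invoke the classical lower bound that every color-critical graph has minimum degree at least $\chi(G)-1$~\cite{dirac1957theorem}; for the edge-critical version used here a short self-contained argument suffices: if $\deg(v)\le n-3$ and $vw\in E(G)$, take a proper $(n-2)$-coloring of $G-vw$ (which exists by criticality) and recolor $v$ with one of the at least two colors not used on its at most $n-3$ neighbors in $G$; this yields a proper $(n-2)$-coloring of $G$, contradicting $\chi(G)=n-1$. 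Hence $\delta(G)\ge n-2$, so the complement $\overline{G}$ has maximum degree at most $1$, i.e.\ $\overline{G}$ is a disjoint union of $t$ edges and $n-2t$ isolated vertices, with $t\ge 1$ since $\chi(G)=n-1<n$ forces $G\neq K_n$.

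Next I would compute $\chi(G)$ from this structure. Assigning each of the $t$ non-adjacent pairs its own color and every remaining vertex a private color gives a proper coloring with $n-t$ colors. Conversely, every color class is a clique of $\overline{G}$, hence has size at most $2$, and the size-$2$ classes are pairwise distinct edges of the matching $\overline{G}$, so a coloring with $c$ classes satisfies $n\le 2t+(c-t)$, i.e.\ $c\ge n-t$. Therefore $\chi(G)=n-t$, and together with $\chi(G)=n-1$ this gives $t=1$, so $G=K_n-e$ for a single edge $e=uv$.

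Finally I would check directly that $K_n-e$ is not $(n-1)$-critical. Pick any vertex $w\notin\{u,v\}$ (such a vertex exists since $n\ge 3$) and set $G'=G-uw$. The only non-edges of $G'$ are $uv$ and $uw$, both incident to $u$, so $G'$ has no independent set of size $3$ and its only independent sets of size $2$, namely $\{u,v\}$ and $\{u,w\}$, are not disjoint; hence every proper coloring of $G'$ has at most one non-singleton color class and so uses at least $n-1$ colors, giving $\chi(G')=n-1$. Since $\chi(G-uw)$ is not strictly smaller than $\chi(G)$, $G$ is not $(n-1)$-critical, a contradiction, completing the proof.

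The only real obstacle is the first step — getting a grip on the structure of $G$; everything afterwards is bookkeeping. If one prefers to avoid the minimum-degree bound, the same structure can be obtained purely by recoloring: two disjoint non-edges, or an independent set of size $3$, would let us merge color classes into an $(n-2)$-coloring, so all non-edges share a common vertex $u$; then $\overline{G-uv}$ is a star plus isolated vertices for any $v\in N(u)$, which has clique-partition number $n-1$, so again $\chi(G-uv)=n-1$, contradicting criticality. I would include whichever variant keeps the paper most self-contained.
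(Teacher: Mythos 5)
Your proof is correct, but it follows a genuinely different route from the paper's. The paper works directly with an optimal $(n-1)$-coloring: the unique size-two color class $\{x,y\}$ must dominate $V(G)\setminus\{x,y\}$ (else an $I_3$ gives an $(n-2)$-coloring), a color-exchange argument shows one of $x,y$, say $x$, is adjacent to all other vertices, and since there is an edge between every pair of color classes the set $V(G)\setminus\{y\}$ forms a $K_{n-1}$; deleting any edge incident on $y$ then leaves the chromatic number at $n-1$. You instead assume criticality from the outset, reprove the Dirac-type bound $\delta(G)\ge\chi(G)-1$ for edge-critical graphs, deduce that $\overline{G}$ is a matching with $t$ edges, compute $\chi(G)=n-t$ to pin down $G=K_n-e$, and then refute criticality of $K_n-e$ by deleting a second edge at the same vertex. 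Your route buys the sharper structural conclusion that $K_n-e$ is the only candidate and ties the lemma to classical critical-graph theory; the paper's is a single self-contained recoloring analysis that never needs the minimum-degree bound and proves the edge-existence statement directly rather than by contradiction. Two minor points: with $n-2$ colors and at most $n-3$ neighbours you are guaranteed only one free colour, not two (one suffices, so this is harmless); and your degree bound tacitly uses the paper's standing assumption that $G$ is connected, since an isolated vertex has no incident edge to delete --- indeed $K_{n-1}+K_1$ has $\chi=n-1$ and \emph{is} $(n-1)$-critical, so connectivity is genuinely needed, just as it is in the paper's proof when it deletes an edge incident on $y$.
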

\begin{proof}
 Let $f:V(G) \rightarrow [n-1]$ be a $(n-1)$ coloring of $G$. Then, there exists two vertices $x$ and $y$ in $G$ such that $f(x)=f(y)$. Observe that every vertex of $V(G) - \{x,y\}$ either adjacent to $x$ or $y$, otherwise, $G$ has an independent set of size three, hence $\chi(G) \leq n-2$, which is a contradiction. 
 
 \emph{Cliam}. Either $N(x)=V(G) - \{x, y\}$ or $N(y)=V(G) - \{x,y\}$
  
  Suppose not, that is there exists $u \in N(x) \setminus N(y)$ and $v \in N(y) \setminus N(x)$. By coloring $y$ with the color of $u$ and $x$ with the color of $v$, we get $\chi(G) \leq n-2$, which is a contradiction. Therefore, at least one of $x$ or $y$ is adjacent to every vertex of $V(G)-\{x,y\}$. Without loss of generality, assume that $N(x)=V(G) - \{x,y\}$.  As $\chi(G)=n-1$, there exists an edge between every pair of color classes, that is $K_{n-1}$ is a subgraph of $G$. Then the graph obtained by deleting any edge incident on $y$, contains $K_{n-1}$ as a subgraph of $G$. Hence, $G$ is not $(n-1)$-critical. \qed
\end{proof}

Next, we show that there exists $(n-1)$-critical graphs with respect to star coloring.   
We give a characterization of $(n-1)$-critical graphs under star coloring. 
%A graph is $2K_2$-free if it contains no two independent edges as an induced subgraph and $I_3$-free if it contains no independent set of size three. 

\begin{lemma}\label{lem-free}
 Let $G$ be a non-complete graph on $n \geq 5$ vertices. $\chi_s(G)=n-1$ if and only if $G$ is $(I_3, 2K_2)$-free. 
\end{lemma}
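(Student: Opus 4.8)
The plan is to prove the two directions separately. For the easy direction, suppose $G$ contains an $I_3$ or a $2K_2$ as an induced subgraph; I want to show $\chi_s(G) \le n-2$. The idea is that either of these configurations gives us three vertices whose identification (into at most two colors) cannot create a bichromatic $P_4$: if $\{a,b,c\}$ is an independent set, color all three with color $1$ and give the remaining $n-3$ vertices distinct colors $2,\dots,n-2$; since $a,b,c$ are pairwise nonadjacent this is proper, and any $P_4$ on four vertices uses at most two of $\{a,b,c\}$ and the other two vertices carry unique colors, so no $P_4$ is $2$-colored. If instead $\{ab, cd\}$ induces a $2K_2$, color $a$ and $c$ with color $1$ (they are nonadjacent), and give $b,d$ and the other $n-4$ vertices distinct colors; properness is clear, and a $2$-colored $P_4$ would need to use color $1$ on two of its vertices, forcing both $a$ and $c$ on it, but then the $P_4$ has only two more vertices which carry unique colors — one checks no $P_4$ through $a$ and $c$ can be bichromatic because any such path would have to alternate $1$ with a single other color, impossible since $a,c$ have no common neighbor structure forcing this. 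So $\chi_s(G)\le n-2$.

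For the harder direction, assume $G$ is $(I_3, 2K_2)$-free; I must show $\chi_s(G) = n-1$, i.e., that $n-1$ colors suffice and $n-2$ do not. The lower bound $\chi_s(G)\ge n-1$ is the crux: suppose for contradiction that some star coloring uses at most $n-2$ colors, so some color class has size $\ge 3$ or two color classes each have size $2$ (pigeonhole on $n$ vertices into $n-2$ classes). A color class of size $\ge 3$ is an independent set of size $3$, contradicting $I_3$-freeness. Two color classes of size exactly $2$, say $\{a,b\}$ and $\{c,d\}$ — these are two nonadjacent pairs; since $G$ is $2K_2$-free, the four vertices do not induce $2K_2$, so there must be an edge among $\{a,c\},\{a,d\},\{b,c\},\{b,d\}$; I then want to show this forces a $2$-colored $P_4$. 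With edges present between the two pairs but $a\not\sim b$, $c\not\sim d$, a short case analysis on how many cross-edges exist (there are between $1$ and $4$) should produce a path like $b$–$a$–$d$–$c$ (if $a\sim d$ and $d\sim c$... but $d\not\sim c$, so rather $b$–$a$–$c$–$d$ when $a\sim c$ and $c\sim$ ... again $c\not\sim d$). The genuinely $2$-colored $P_4$ must alternate colors, so it reads (color $1$, color $2$, color $1$, color $2$), meaning it visits $a$ or $b$, then $c$ or $d$, then the other of $a,b$, then the other of $c,d$; such a path exists iff the cross-edges contain a perfect matching between $\{a,b\}$ and $\{c,d\}$ plus one more edge, or more simply iff at least two independent cross-edges plus connectivity arrange a $P_4$. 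I expect the main obstacle here: carefully enumerating the cross-edge patterns and checking that $2K_2$-freeness together with $n-2$ colorability forces enough cross-edges to build the alternating $P_4$, while also handling the case where $G$ has exactly $n-2$ used colors with one class of size $2$ and... wait, $n$ vertices in $n-2$ classes with no class of size $\ge 3$ forces exactly two classes of size $2$, so that is the only case.

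For the upper bound $\chi_s(G)\le n-1$, I use the assumed $(I_3,2K_2)$-freeness to find one nonadjacent pair to merge: since $G$ is non-complete it has two nonadjacent vertices $x,y$; color them both $1$ and the rest distinctly with $2,\dots,n-1$. Properness holds as $x\not\sim y$. A $2$-colored $P_4$ would require two vertices of color $1$, i.e., both $x$ and $y$, and two more vertices of a single color $2$ — but the remaining vertices all have distinct colors, so a $P_4$ with only colors $1$ and one other color is impossible unless the $P_4$ uses $x,y$ and exactly two vertices of the same non-$1$ color, which cannot happen. Hence $\chi_s(G) \le n-1$, and combined with the lower bound, $\chi_s(G) = n-1$, completing the proof. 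The delicate part throughout is the $2K_2$-case of the lower bound, where I will need a clean claim of the form: if $\{a,b\}$ and $\{c,d\}$ are independent pairs and $G[\{a,b,c,d\}]$ is not $2K_2$, then any coloring giving $a,b$ one color and $c,d$ another color contains a bichromatic $P_4$ — which I will verify by exhausting the (few) graphs on four vertices with two specified non-edges that are not $2K_2$.
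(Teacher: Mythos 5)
Your skeleton matches the paper's (construct an $(n-2)$-coloring from a forbidden configuration; for the converse, pigeonhole the color classes of a hypothetical $(n-2)$-star coloring), but two steps are genuinely broken as written. First, in the easy direction, your $2K_2$ case does not prove what you claim: coloring only $a$ and $c$ with color $1$ and giving $b$, $d$ and the remaining $n-4$ vertices distinct colors uses $1+(n-2)=n-1$ colors, so the conclusion ``$\chi_s(G)\le n-2$'' does not follow --- you have only reproved the trivial bound $\chi_s(G)\le n-1$, which is no contradiction with $\chi_s(G)=n-1$. You must merge two pairs, as the paper does: with induced edges $ab$ and $cd$, give $\{a,c\}$ color $1$, $\{b,d\}$ color $2$, and all other vertices distinct colors; this uses $n-2$ colors, and no bichromatic $P_4$ arises because the only repeated colors sit on $\{a,b,c,d\}$, whose induced subgraph has just the two edges $ab$, $cd$ and hence contains no $P_4$.

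Second, in the lower bound of the hard direction, the ``clean claim'' you plan to verify (if $\{a,b\}$, $\{c,d\}$ are the two size-two classes and the four vertices do not induce a $2K_2$, then a bichromatic $P_4$ exists) is false: if the only cross-edge is $ac$, the four vertices induce a single edge, which is not a $2K_2$ yet contains no $P_4$, and a bichromatic $P_4$ would have to lie inside $\{a,b,c,d\}$ since all other vertices carry unique colors. The missing ingredient --- and the key step in the paper's proof --- is $I_3$-freeness: if, say, $b$ were adjacent to neither $c$ nor $d$, then $\{b,c,d\}$ would be an independent triple, so each of $a,b$ has a neighbor in $\{c,d\}$ and each of $c,d$ has a neighbor in $\{a,b\}$. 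This gives every cross-degree at least one; $2K_2$-freeness then rules out the cross-edges being exactly a perfect matching, so there are at least three cross-edges, and any three cross-edges on these four vertices produce an alternating two-colored $P_4$, the desired contradiction. Without invoking $I_3$-freeness at this point (``$n-2$ colorability'' does not force cross-edges), your planned enumeration of four-vertex configurations would expose the claim as false rather than complete the proof.
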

\begin{proof}
 Let $G$ be a graph such that $\chi_s(G)=n-1$. Suppose $G$ has an $I_3$ then by coloring all vertices in $I_3$ with one color and all other vertices with distinct colors we get a $(n-2)$-star coloring of $G$, hence $\chi_s(G) \leq n-2$, which is a contradiction. Suppose $G$ has a $2K_2$ then by coloring all vertices in $2K_2$ with two colors and all other vertices with distinct colors we get a $(n-2)$-star coloring of $G$, hence $\chi_s(G) \leq n-2$, which is again a contradiction. Therefore $G$ is $(I_3, 2K_2)$-free. 
 
 For the reverse direction, assume that $G$ is $(I_3, 2K_2)$-free.  As $G$ is non-complete, clearly $\chi_s(G)\leq n-1$. Suppose $\chi_s(G)= \ell \leq n-2$. Let $f$ be a $\ell$-star coloring of $G$. If there exists an $i \in [\ell]$ such that $|f^{-1}(i)|\geq 3$, then $G$ has an $I_3$. Hence, we can assume that $|f^{-1}(i)|\leq 2$ for all $i \in [\ell]$. As $\ell \leq n-2$, there exists $i, j \in [\ell]$, such that $|f^{-1}(i)|= 2$ and $|f^{-1}(j)|= 2$. Let $f^{-1}(i)=\{u_1, u_2\}$ and $f^{-1}(j)=\{v_1, v_2\}$. Observe that $u_1$ (resp. $u_2)$ is adjacent to at least one of $v_1$ or $v_2$, otherwise $\{u_1,v_1,v_2\}$ (resp. $\{u_2,v_1,v_2\}$)  forms an $I_3$. Similarly, $v_1$ (resp. $v_2$) is adjacent to at least one of $u_1$ or $u_2$.  As $G$ is $2K_2$-free, there are at least three edges between $\{u_1,u_2\}$ and  $\{v_1,v_2\}$. Then we get bi-colored $P_4$, in $G$ with colors $i$ and $j$, which is a contradiction to the fact that $f$ is a star coloring of $G$. Hence $\chi_s(G) =n-1$. 
\end{proof}

\begin{corollary}
  Let $G$ be a  non-complete graph on $n \geq 5$ vertices.  $G$ is $(n-1)$-critical if and only if $G$ is $(I_3, 2K_2)$-free and $G-e$ contains either an $I_3$ or $2K_2$ for every edge $e$ of $G$. 
\end{corollary}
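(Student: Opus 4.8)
The plan is to obtain this statement as an immediate consequence of Lemma~\ref{lem-free}, by unwinding the definition of $(n-1)$-criticality and applying the lemma twice: once to $G$ and once to each $G-e$.

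First I would recall that, by definition, $G$ is $(n-1)$-critical exactly when $\chi_s(G)=n-1$ and $\chi_s(G-e)<n-1$ for every edge $e\in E(G)$. The condition $\chi_s(G)=n-1$ translates directly: since $G$ is non-complete on $n\geq 5$ vertices, Lemma~\ref{lem-free} says this is equivalent to $G$ being $(I_3,2K_2)$-free.

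Next I would fix an arbitrary edge $e\in E(G)$ and analyze $\chi_s(G-e)$. The key observation is that $G-e$ is still a non-complete graph on the same $n\geq 5$ vertices (deleting an edge cannot make a non-complete graph complete), so Lemma~\ref{lem-free} applies verbatim to $G-e$. Since $\chi_s(G-e)\leq\chi_s(G)=n-1$, the inequality $\chi_s(G-e)<n-1$ is the same as $\chi_s(G-e)\leq n-2$, which, by negating both sides of the equivalence in Lemma~\ref{lem-free} applied to $G-e$, holds if and only if $G-e$ is \emph{not} $(I_3,2K_2)$-free, i.e.\ $G-e$ contains an induced $I_3$ or an induced $2K_2$. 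Combining the two equivalences yields exactly the stated characterization.

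I do not anticipate a genuine obstacle here; the corollary is essentially a restatement of Lemma~\ref{lem-free} for the critical graphs. The only points deserving an explicit line are that $G-e$ remains eligible for Lemma~\ref{lem-free} (it is non-complete and has at least five vertices), and that an independent set of size three is automatically an induced $I_3$, so the phrasings ``$G-e$ contains an $I_3$ or $2K_2$'' and ``$G-e$ is not $(I_3,2K_2)$-free'' coincide.
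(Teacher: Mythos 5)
Your proposal is correct and is exactly the intended argument: the paper states this as an immediate corollary of Lemma~\ref{lem-free}, obtained by unwinding the definition of $(n-1)$-criticality and applying the lemma to $G$ and to each $G-e$ (which, as you note, remains non-complete on $n\geq 5$ vertices). Your additional remarks on monotonicity of $\chi_s$ under edge deletion and on the equivalence of the two phrasings are fine and do not change the route.
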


\begin{figure}[t]
\centering

\includegraphics[trim=6cm 12.5cm 6cm 11cm, clip=true, scale=1]{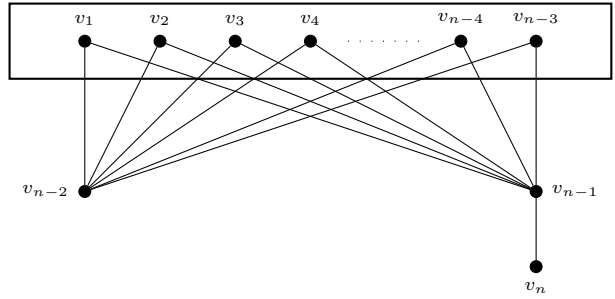}
	
\caption{Horn graph $H_n$. The vertices $\{v_1,v_2,\ldots, v_{n-3}\}$ forms a clique of size $n-3$.}
\end{figure}
Now, we explicitly describe a class of $(n-1)$-critical graphs. We call them as horn graphs. 
\subsection{Horn graphs: a class of $(n-1)$-critical graphs}
For $n \geq 5$, a horn graph $H_n$ on $n$-vertices is defined as follows.
$$V(H_n)=\{v_1,v_2, \ldots, v_n\}$$
$$E(H_n)=\{v_iv_j~|~ 1 \leq i \leq n-3, i+1 \leq j \leq n-1\} \cup \{v_{n-1}v_n\}$$

\begin{lemma}
Let $H_n$ be a horn graph then $\chi_s(H_n)=n-1$.
\end{lemma}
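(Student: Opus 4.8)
The plan is to deduce this from Lemma~\ref{lem-free}. Since $n\ge 5$ and $H_n$ is visibly non-complete (the pendant vertex $v_n$ has degree one, so $v_1v_n\notin E(H_n)$), Lemma~\ref{lem-free} tells us that $\chi_s(H_n)=n-1$ is \emph{equivalent} to $H_n$ being $(I_3,2K_2)$-free. So the whole task reduces to checking these two forbidden-subgraph conditions.

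First I would record the edge structure explicitly. From the definition, $\{v_1,\dots,v_{n-2}\}$ induces a clique $K_{n-2}$, each of $v_{n-2}$ and $v_{n-1}$ is adjacent to every vertex of $\{v_1,\dots,v_{n-3}\}$, the pair $v_{n-2}v_{n-1}$ is a non-edge, and $v_n$ is a pendant whose unique neighbour is $v_{n-1}$. Equivalently: the non-edges of $H_n$ are exactly $v_{n-2}v_{n-1}$ together with the pairs $v_iv_n$ for $1\le i\le n-2$, and every other pair of vertices forms an edge. I expect assembling this non-edge list to be the only bookkeeping step; the rest is short case analysis against it.

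For $I_3$-freeness, take any three vertices forming an independent set, so all three of their pairs are non-edges. If $v_n$ is not among them, the only available non-edge is $v_{n-2}v_{n-1}$, so three pairwise non-edges are impossible. If the triple is $\{v_n,x,y\}$, then $xy$ must be the non-edge $v_{n-2}v_{n-1}$, forcing $v_{n-1}\in\{x,y\}$; but $v_{n-1}v_n$ is an edge, a contradiction. Hence $H_n$ has no induced $I_3$. For $2K_2$-freeness, suppose $\{a,b,c,d\}$ induces a $2K_2$ with edges $ab$ and $cd$; then $ac,ad,bc,bd$ are all non-edges. If $v_n\notin\{a,b,c,d\}$, these four pairs lie inside $\{v_1,\dots,v_{n-1}\}$, which has only the single non-edge $v_{n-2}v_{n-1}$ --- impossible. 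Otherwise $v_n\in\{a,b,c,d\}$, and since its only neighbour is $v_{n-1}$, the edge of the $2K_2$ through $v_n$ must be $v_nv_{n-1}$; after relabelling $\{a,b\}=\{v_n,v_{n-1}\}$ and $c,d\in\{v_1,\dots,v_{n-2}\}$. Then $v_{n-1}c$ and $v_{n-1}d$ must both be non-edges, but inside $\{v_1,\dots,v_{n-2}\}$ the vertex $v_{n-1}$ is non-adjacent only to $v_{n-2}$, so $c,d$ cannot both be chosen --- contradiction. Thus $H_n$ is $2K_2$-free, and Lemma~\ref{lem-free} gives $\chi_s(H_n)=n-1$.

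The only (mild) obstacle is remembering to treat the pendant vertex $v_n$ as a separate case in each check; once the non-edge list is in hand, nothing deep remains. (Alternatively one could argue directly for the upper bound by exhibiting an explicit $(n-1)$-star colouring --- give $v_1,\dots,v_{n-3}$ distinct colours, colour both $v_{n-2}$ and $v_{n-1}$ with one further colour, and give $v_n$ the last colour; this is proper, and no $P_4$ is $2$-coloured because $\{v_{n-2},v_{n-1}\}$ is the only monochromatic pair and it is non-adjacent --- but the matching lower bound $\chi_s(H_n)\ge n-1$ would still be obtained from the $(I_3,2K_2)$-freeness via Lemma~\ref{lem-free}, so routing everything through that lemma is cleanest.)
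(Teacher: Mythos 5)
Your proof is correct, and your reading of the edge set is accurate: the non-edges of $H_n$ are exactly $v_{n-2}v_{n-1}$ and $v_iv_n$ for $1\le i\le n-2$, from which your case checks for $I_3$-freeness and $2K_2$-freeness go through, and Lemma~\ref{lem-free} (whose proof precedes this lemma and does not depend on it, so there is no circularity) then yields $\chi_s(H_n)=n-1$ in one stroke. This is, however, a genuinely different route from the paper's. The paper argues directly: it notes that $\{v_1,\dots,v_{n-2}\}$ is a clique, so $\chi_s(H_n)\ge n-2$; then, assuming an $(n-2)$-star coloring, it shows the colors of $v_{n-1}$ and $v_n$ are forced in a way that creates a bicolored $P_4$ (e.g.\ $v_n$--$v_{n-1}$--$v_j$--$v_{n-2}$), giving $\chi_s(H_n)\ge n-1$; and it exhibits an explicit $(n-1)$-star coloring for the upper bound. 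Your approach is shorter and arguably more natural in context, since the very next lemma (that $H_n$ is $(n-1)$-critical) is proved by exactly the same mechanism of exhibiting an $I_3$ or $2K_2$ in $H_n-e$ and invoking Lemma~\ref{lem-free}; routing both directions through the characterization makes the two lemmas uniform. What the paper's direct argument buys instead is independence from the characterization lemma and a concrete picture of how an $(n-2)$-coloring fails (the forced bicolored $P_4$), together with an explicit witness coloring --- essentially the same witness you sketch parenthetically, which is also valid since only one color class has size two and its two vertices are nonadjacent.
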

\begin{proof}
 First observe that $U=\{v_1, \ldots, v_{n-2}\}$ induces a clique of size $n-2$ in $H_n$, that is $\chi_s(H_n) \geq n-2$. 
 Suppose $\chi_s(H_n) = n-2$, let $f$ be a $(n-2)$- star coloring of $H_n$. With out loss of generality assume that $f(v_i)=i$ for $i \in [n-2]$. As $v_{n-1}$ is adjacent to every $v_i$, for $i \in [n-3]$, we have $f(v_{n-1})=n-2$. If $f(v_n)=j$ for some $j \in [n-3]$, then $H_n$ contains a bi-colored $P_4$ with colors $(n-2)$ and $j$. This is a contradiction to the fact that $f$ is star coloring. Therefore $\chi_s(H_n) \geq n-1$.
 
 Next we give a $(n-1)$-star coloring of $H_n$. Let $g: V(H_n) \rightarrow [n-1]$ defined as follows. 
$g(v_i)=i$ for $i \in [n-1]$ and $g(v_n)=1$. As $|g^{-1}(1)|=2$ and $|g^{-1}(i)|=1$ for $i \in \{2, \ldots n-1\}$, $g$ is a $(n-1)$-star coloring of $H_n$. Hence $\chi_s(H_n) = n-1$.
\end{proof}
\
\begin{lemma}
Let $H_n$ be an horn graph then $H_n$ is $(n-1)$-critical.
\end{lemma}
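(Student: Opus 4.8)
The plan is to invoke the characterization from the Corollary (equivalently Lemma~\ref{lem-free} plus the edge condition): since $H_n$ is a non-complete graph on $n \geq 5$ vertices and we have already shown $\chi_s(H_n) = n-1$, by Lemma~\ref{lem-free} the graph $H_n$ is automatically $(I_3, 2K_2)$-free. So the only thing left to verify is the criticality condition: for every edge $e \in E(H_n)$, the graph $H_n - e$ contains an induced $I_3$ or an induced $2K_2$. Equivalently, by Lemma~\ref{lem-free} again, it suffices to show $\chi_s(H_n - e) \leq n-2$ for every edge $e$, which I would do directly by exhibiting an $(n-2)$-star coloring of $H_n - e$.

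First I would organize the edges of $H_n$ into a small number of types according to the definition of $E(H_n)$: (i) the pendant edge $v_{n-1}v_n$; (ii) edges $v_iv_{n-1}$ with $1 \le i \le n-3$; (iii) edges inside the clique on $\{v_1,\dots,v_{n-2}\}$, which further split into edges $v_iv_{n-2}$ and edges $v_iv_j$ with $j \le n-3$. For each type I would produce a witness. For the pendant edge $v_{n-1}v_n$, after deletion $v_n$ becomes isolated, so $\{v_n\}$ together with any two nonadjacent vertices among $\{v_1,\dots,v_{n-1}\}$ (e.g.\ $v_{n-1}$ is nonadjacent to $v_n$ already, but more simply $v_n$ is now adjacent to nothing, so pick $v_n$ and any two vertices forming a non-edge; note $v_{n-2}$ and $v_{n-1}$ — is that an edge? $v_{n-2}v_{n-1}$ is in $E(H_n)$ since $n-2 \le n-3$ fails, so check: the index range needs $i \le n-3$, and $v_{n-1}v_n$ is the only other edge, so $v_{n-2}v_{n-1} \notin E(H_n)$) — so $\{v_n, v_{n-2}, v_{n-1}\}$ is an induced $I_3$ in $H_n - v_{n-1}v_n$. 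For the other edge types, the cleanest route is to give the coloring explicitly: keep $g(v_i) = i$ for most $i$, and merge the two endpoints of the deleted edge (or one endpoint with $v_n$) into a shared color class, then check no bi-colored $P_4$ survives. The key structural fact making this work is that $v_n$ has degree one and $v_{n-1}$ has degree $n-2$ but is the unique "high" vertex outside the clique core, so most deletions free up a safe pair to recolor.

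The step I expect to be the main obstacle is verifying the criticality for edges strictly inside the clique $\{v_1,\dots,v_{n-3}\}$, say $e = v_iv_j$ with $i<j\le n-3$: after deleting $e$, the pair $\{v_i,v_j\}$ is nonadjacent, but both are still adjacent to $v_{n-1}$ and to all other clique vertices, so recoloring $v_i$ and $v_j$ with the same color risks creating a $2$-colored $P_4$ through $v_{n-1}$ or through another clique vertex. I would handle this by instead finding an induced $2K_2$ or $I_3$ in $H_n - e$ combinatorially: $\{v_i, v_j\}$ is an induced non-edge, and I need a third vertex nonadjacent to both (for $I_3$) or an edge disjoint from and nonadjacent to both (for $2K_2$). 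Since $v_n$ is adjacent only to $v_{n-1}$, the set $\{v_i, v_j\}$ together with... actually $v_n$ is nonadjacent to both $v_i$ and $v_j$ (as $i,j \le n-3 < n-1$), so $\{v_i, v_j, v_n\}$ is an induced $I_3$ in $H_n - e$, which settles this case immediately and in fact uniformly handles every clique edge not incident to $v_{n-1}$ or $v_{n-2}$. The remaining handful of cases — edges incident to $v_{n-2}$, and edges $v_iv_{n-1}$ — I would dispose of by either a similar witness (using $v_n$ or the freed non-edge) or the explicit recoloring, checking the short finite list of $P_4$'s that could become bi-colored. Assembling these cases completes the proof that $H_n$ is $(n-1)$-critical.
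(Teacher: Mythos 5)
Your route is the paper's own: reduce, via Lemma~\ref{lem-free}, to exhibiting an induced $I_3$ or $2K_2$ in $H_n-e$ for every edge $e$, and your witnesses for the pendant edge ($\{v_{n-2},v_{n-1},v_n\}$) and for clique edges ($\{v_i,v_j,v_n\}$) coincide with the paper's; note that the latter already disposes of edges incident to $v_{n-2}$ as well, since $v_nv_{n-2}\notin E(H_n)$, so that subcase needs no separate treatment.

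The one family you leave unresolved, $e=v_iv_{n-1}$ with $i\in[n-3]$, is the only real gap, and it cannot be closed by an independent-set witness: in $H_n-e$ the only nonadjacent pairs are $(v_{n-2},v_{n-1})$, $(v_i,v_{n-1})$ and pairs containing $v_n$, while $\{v_1,\dots,v_{n-2}\}$ remains a clique and $v_n$ is still adjacent to $v_{n-1}$, so no three vertices are pairwise nonadjacent and $H_n-e$ has no $I_3$. The witness must be a $2K_2$, namely the one the paper uses: the edges $v_{n-1}v_n$ and $v_iv_{n-2}$. This is induced because, after deleting $v_iv_{n-1}$, the only nonneighbors of $v_{n-1}$ are $v_i$ and $v_{n-2}$, and the unique neighbor of $v_n$ is $v_{n-1}$; hence these four vertices span exactly those two edges. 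Writing this down (your fallback of an explicit $(n-2)$-star recoloring would also work, but is then unnecessary) completes the case analysis and the proof.
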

\begin{proof}
 We show that for every edge $e$ of $H_n$, $\chi_s(H_n-e)<n-1$.
 If $e=v_nv_{n-1}$, then $H_n-e$ has an independent set $\{v_{n-2}, v_{n-1}, v_n\}$ of size $3$, hence from Lemma~\ref{lem-free}, $\chi_s(G) \leq n-2<n-1$. If $e=v_{n-1}v_j$ for some $j \in [n-2]$, then the edges $v_{n-1}v_n$ and $v_jv_{n-2}$ forms a $2K_2$, hence from Lemma~\ref{lem-free}, $\chi_s(G) \leq n-2 <n-1$.
 If $e=v_iv_j$, for some $i, j \in [n-2], i \neq j$, then $H_n-e$ has an independent set $\{v_{i}, v_{j}, v_n\}$ of size $3$, hence from Lemma~\ref{lem-free}, $\chi_s(G) \leq n-2<n-1$. Hence $H_n$ is $(n-1)$-critical.
\qed
\end{proof}

\subsection{Minimum and maximum number of edges in $(n-1)$-critical graphs}
The problem of finding the minimum number of edges in a $k$-critical graph on $n$ vertices with respect to proper coloring has been  studied in~\cite{dirac1957theorem,krivelevich1997minimal}. 
In this subsection, we give upper and lower bounds on the number of edges in a $(n-1)$-critical graph with respect to star coloring. 

\begin{lemma}\label{lem-not-critical}
 Let $G$ be a graph on $n \geq 5$ vertices such that $\chi_s(G)=\Delta(G)=n-1$, then $G$ is not $(n-1)$-critical.
\end{lemma}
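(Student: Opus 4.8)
The plan is to exhibit a single edge $e$ of $G$ with $\chi_s(G-e) = n-1$; since $G-e$ has the same vertex set, this says $\chi_s(G-e) = \chi_s(G)$, so $G$ is not $(n-1)$-critical. Since $\chi_s(G) = n-1 < n$, the graph $G$ is non-complete, hence by Lemma~\ref{lem-free} it is $(I_3, 2K_2)$-free; and for any edge $e$ the graph $G-e$ is still non-complete and on $n \geq 5$ vertices, so by Lemma~\ref{lem-free} again it suffices to produce an edge $e$ for which $G-e$ remains $(I_3, 2K_2)$-free.

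First I would fix a vertex $v$ of degree $\Delta(G) = n-1$, so that $v$ is adjacent to every other vertex, and take $e = vu$ for an arbitrary neighbour $u$ of $v$. I then claim $G - vu$ has no induced $I_3$ and no induced $2K_2$. For $I_3$: any induced $I_3$ of $G-vu$ not already present in $G$ must involve the deleted pair, i.e. have the form $\{v,u,w\}$; but $v$ is still adjacent to $w$ in $G-vu$ since $w \neq u$, so this set is not independent, and $G$ itself had no $I_3$. For $2K_2$: an induced $2K_2$ of $G-vu$ not present in $G$ must be a $4$-set whose induced subgraph in $G$ has exactly one more edge, namely $vu$, so $v$ and $u$ are among these four vertices. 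Since every vertex of a $2K_2$ has degree one, $v$ and $u$ lie in the two \emph{different} matching edges (they cannot be the two endpoints of a common matching edge, as that edge would be missing in $G-vu$); write the four vertices as $v,b,u,d$ with matching edges $vb$ and $ud$. Then $vd$ is a non-edge of this induced $2K_2$, contradicting that $v$ is adjacent to $d$ in $G$ (as $d \neq v$) and hence in $G-vu$ (as $d \neq u$).

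Consequently $G - vu$ is $(I_3,2K_2)$-free, so $\chi_s(G-vu) = n-1 = \chi_s(G)$ by Lemma~\ref{lem-free}, and $G$ is not $(n-1)$-critical.

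I do not anticipate a genuine obstacle here; the only step needing care is the case analysis for $2K_2$ — one must check every way the universal vertex $v$ and its neighbour $u$ can sit inside the putative $2K_2$ and rule each out using the universality of $v$. The side conditions for applying Lemma~\ref{lem-free} to $G-vu$ (non-complete, at least five vertices) are immediate.
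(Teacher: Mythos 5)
Your proof is correct and takes essentially the same route as the paper: delete an edge incident to a vertex of degree $n-1$ and argue, via the $(I_3,2K_2)$-free characterization of Lemma~\ref{lem-free}, that the resulting graph still has star chromatic number $n-1$, so $G$ is not $(n-1)$-critical. The paper's only (inessential) difference is that it chooses the deleted edge so that its non-universal endpoint avoids the two vertices sharing a color, a restriction that plays no role in the actual argument, and it rules out $I_3$ and $2K_2$ through the degree of the universal vertex in $G-e$ rather than your explicit case analysis.
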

\begin{proof}
Let $f$ be a $(n-1)$-star coloring of $G$. As $\chi_s(G)=n-1$, there exists two vertices $v_i, v_j \in V(G)$ such that $f(v_i)=f(v_j)$. Let $w \in V(G)$ be a vertex having the degree $\Delta(G)$, that is $w$ is adjacent to every other vertex of $G$. Let $v_k$ be a neighbor of $w$, where $ k \neq i, j$.
Let $H=G - wv_k$

\emph{Claim.} $\chi_s(H)=n-1$.

It is enough to show that $H$ is $(I_3, 2K_2)$-free (because of Lemma~\ref{lem-free}). If $H$ has an $I_3$ then $w \notin I_3$, as degree of $w$ in $H$ is $n-2$.  Then the same $I_3$ is also present in $G$, which is a contradiction. If $H$ has a $2K_2$ then $w \notin V(2K_2)$, that is, none of the edges of $2K_2$ are incident on $w$, which implies there is a $2K_2$ in $G$ which is a contradiction. Therefore $H$ is $(I_3,2K_2)$-free and from Lemma~\ref{lem-free}, we get $\chi_s(H)=n-1$.
\end{proof}

\begin{lemma}\label{lem-max-degree}
 Let $G$ be a graph on $n \geq 5$ vertices and $m$ edges. If $m > (n-1)(n-2)/2$ then $\Delta(G) \geq n-2$. 
\end{lemma}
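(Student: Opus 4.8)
The plan is to argue by contradiction via a simple degree-sum (handshaking) count. Suppose, to the contrary, that $\Delta(G) \le n-3$. Then every vertex of $G$ has degree at most $n-3$, so
$$2m = \sum_{v \in V(G)} \deg(v) \le n(n-3),$$
which gives $m \le n(n-3)/2$. It remains only to compare this bound with the hypothesis $m > (n-1)(n-2)/2$.

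The key arithmetic observation is that $n(n-3) = n^2 - 3n = (n^2 - 3n + 2) - 2 = (n-1)(n-2) - 2$, so in particular $n(n-3) < (n-1)(n-2)$. Combining this with the bound above yields
$$m \le \frac{n(n-3)}{2} < \frac{(n-1)(n-2)}{2} < m,$$
a contradiction. Hence the assumption $\Delta(G) \le n-3$ is untenable and $\Delta(G) \ge n-2$, as claimed. There is essentially no obstacle here: the statement follows immediately once edges are counted by degrees, the only point meriting a line of justification being the elementary inequality $n(n-3)/2 < (n-1)(n-2)/2$. (This lemma will then be used together with Lemma~\ref{lem-not-critical} to rule out criticality once the edge count exceeds $(n-1)(n-2)/2$, which is exactly the source of the upper bound in the edge-count theorem.)
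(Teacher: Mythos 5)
Your proof is correct and uses the same underlying idea as the paper, namely a handshaking/degree-sum count; the paper bounds $\Delta(G)$ directly from $n\Delta(G) \geq 2m > (n-1)(n-2)$, while you run the contrapositive, and the two arguments are equivalent.
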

\begin{proof}
 Using the Handshaking lemma, we get 
 $n \Delta(G) \geq 2m>(n-1)(n-2)$. That is $\Delta(G) >(n-1)(n-2)/n=(n-3)+2/n$. Hence, $\Delta(G) \geq n-2$.
\end{proof}

\begin{theorem}
 Let $G$ be a graph on $n \geq 5$ vertices, $m$ edges with $\chi_s(G)=n-1$. If $m > (n-1)(n-2)/2$  then $G$ is not $(n-1)$-critical.
\end{theorem}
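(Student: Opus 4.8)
The plan is to exhibit a single edge $e$ of $G$ with $\chi_s(G-e)=n-1$; since any star colouring of $G$ restricts to one of $G-e$ we have $\chi_s(G-e)\le\chi_s(G)=n-1$ in any case, so such an $e$ certifies that $G$ is not $(n-1)$-critical. Note $G$ is non-complete (a complete graph has star chromatic number $n$), so Lemma~\ref{lem-free} applies to $G$ and to $G-e$, and it suffices to find $e$ such that $G-e$ is $(I_3,2K_2)$-free. I would work in the complement: an independent set of size $3$ in $G$ is a triangle in $\overline{G}$, and an induced $2K_2$ in $G$ (four vertices inducing exactly two disjoint edges) is an induced $4$-cycle in $\overline{G}$, and conversely. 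Hence $G$ is $(I_3,2K_2)$-free if and only if $\overline{G}$ has no triangle and no induced $C_4$, and deleting $e=xy$ from $G$ corresponds to \emph{adding} the edge $xy$ to $\overline{G}$; the goal becomes finding a non-edge $xy$ of $\overline{G}$ whose addition preserves the property ``no triangle, no induced $C_4$''.

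Next I count edges in the complement. From $m>(n-1)(n-2)/2$ one gets $|E(\overline{G})|=\binom{n}{2}-m<\binom{n}{2}-(n-1)(n-2)/2=n-1$, so $\overline{G}$ has at most $n-2$ edges on $n$ vertices and therefore at least two connected components. Pick $x,y$ in two different components of $\overline{G}$; then $xy\notin E(\overline{G})$, i.e.\ $xy\in E(G)$. I claim $\overline{G}+xy$ still has no triangle and no induced $C_4$. A triangle or induced $C_4$ of $\overline{G}+xy$ that avoids the edge $xy$ cannot use both $x$ and $y$ (a triangle is complete, and in an induced $C_4$ the edge $xy$ would be a forbidden chord), hence already lies in $\overline{G}$ — impossible. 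So a newly created such subgraph uses $xy$ as an edge: a triangle through $xy$ needs a common $\overline{G}$-neighbour of $x$ and $y$, impossible across components; an induced $C_4$ of the form $x,y,p,q$ needs $p\in N_{\overline{G}}(y)$, $q\in N_{\overline{G}}(x)$ and $pq\in E(\overline{G})$, impossible since $p,q$ lie in different components. Thus $\overline{G}+xy$ has no triangle and no induced $C_4$, so $G-xy$ is $(I_3,2K_2)$-free, and Lemma~\ref{lem-free} gives $\chi_s(G-xy)=n-1$; hence $G$ is not $(n-1)$-critical.

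All of this is short; the genuinely useful idea is passing to the complement, after which $|E(\overline{G})|\le n-2$ transparently forces $\overline{G}$ to be disconnected — and this is the only place the hypothesis $m>(n-1)(n-2)/2$ is used. I do not foresee a real obstacle; the pieces I would be most careful to state cleanly are the complement dictionary ($I_3\leftrightarrow$ triangle, $2K_2\leftrightarrow$ induced $C_4$) and the fact that adding one edge to a graph with no triangle and no induced $C_4$ can create such a subgraph only ``through'' the new edge. Finally, this argument subsumes the case $\Delta(G)=n-1$ treated in Lemma~\ref{lem-not-critical} (a vertex of degree $n-1$ in $G$ is isolated in $\overline{G}$, hence its own component); alternatively one may invoke Lemma~\ref{lem-max-degree} to reduce to $\Delta(G)\in\{n-1,n-2\}$, handle $\Delta(G)=n-1$ by Lemma~\ref{lem-not-critical}, and run the complement argument only for $\Delta(G)=n-2$.
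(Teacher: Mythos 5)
Your proof is correct, and it takes a genuinely different route from the paper. The paper proves this theorem by induction on $n$: it invokes Lemma~\ref{lem-max-degree} to get $\Delta(G)\ge n-2$, disposes of $\Delta(G)=n-1$ via Lemma~\ref{lem-not-critical}, and in the case $\Delta(G)=n-2$ deletes a maximum-degree vertex, applies the inductive hypothesis to $H=G-v$ to obtain a removable edge there, and lifts it back to $G$; this requires a base-case check at $n=5$ (which the paper only asserts). You instead argue directly in the complement: the translation $I_3\leftrightarrow K_3$ and induced $2K_2\leftrightarrow$ induced $C_4$ is correct, the count $|E(\overline{G})|<\binom{n}{2}-(n-1)(n-2)/2=n-1$ indeed forces $\overline{G}$ to be disconnected, and your case analysis that adding an edge between two components of a (triangle, induced $C_4$)-free graph creates neither a triangle nor an induced $C_4$ is sound (any new such subgraph must use the added edge, and a triangle or a $C_4$ through that edge would require a common neighbour or a cross-component edge). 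Combined with both directions of Lemma~\ref{lem-free} (applicable since $G$ and $G-xy$ are non-complete on $n\ge 5$ vertices), this yields an edge whose deletion keeps the star chromatic number at $n-1$. Your argument is shorter and more self-contained: it avoids the induction, the unverified base case, and Lemmas~\ref{lem-not-critical} and~\ref{lem-max-degree} altogether, and it actually proves the slightly stronger fact that every $(n-1)$-critical graph on $n\ge 5$ vertices has a connected complement, from which the edge bound $m\le\binom{n}{2}-(n-1)$ $=(n-1)(n-2)/2$ follows; the paper's inductive route buys nothing beyond reusing its auxiliary lemmas.
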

\begin{proof}
 We prove it by using induction on number vertices.

 \emph{Base case:} Let $G$ be a connected graph with $n=5$ vertices, and $m>6$, then we can easily see that there is no $4$-critical graph on $5$ vertices having at least $7$ edges. Hence the base case holds.
 
 Assume that the statement is true for all graphs having at most $k-1$ vertices. 
 Let $G$ be a connected graph on $k$ vertices, $m > (k-1)(k-2)/2$  and $\chi_s(G)=k-1$. We need to show that $G$ is not $(k-1)$-critical.
 
 From Lemma~\ref{lem-max-degree}, we know that $\Delta(G) \geq k-2$. If $\Delta(G)=k-1$ them from Lemma~\ref{lem-not-critical}, we know that $G$ is not $(k-1)$-critical. Therefore we assume that $\Delta(G)=k-2$.

 Let $v$ be a vertex such that $deg(v)=k-2$.  Let $H=G-v$ then $|V(H)|=k-1$.
 Notice that $H$ is not a complete graph otherwise, we get $\Delta(G)=k-1$, hence $\chi_s(H)=k-2$. 
 
 $$|E(H)|>(k-1)(k-2)/2 -(k-2)>(k-2)(k-3)/2$$
 
 By inductive hypothesis, $H$ is not $(k-2)$-critical. That is there exists an edge $e \in E(H)$ such that $\chi_s(H)=\chi_s(H-e)=k-2$. From Lemma~\ref{lem-free}, the graphs $H$ and $H-e$ are $(I_3, 2K_2)$-free. 
 
 \emph{Claim:} $\chi_s(G-e)=k-1$
 
 It is enough  to show that $G$ is $(I_3,2K_2)$-free. Suppose $G$ has an $I_3$. Then $v \notin I_3$, as any independent set containing $v$ has size two.
 Which implies there is an $I_3$ in $H$, which is a contradiction.
 
 Similarly, If $G$ has a $2K_2$,  then $v \notin V(2K_2)$ as $deg(v) =k-2$. Which implies there is a $2K_2$ in $H$, which is a contradiction. Hence $G-e$ is $(I_3, 2K_2)$-free. Therefore, using Lemma~\ref{lem-free} we conclude that $G$ is not $(n-1)$-critical. 
\end{proof}

\begin{theorem}
 Let $G$ be a graph on $n \geq 5$ vertices and $m$ edges. If $\chi_s(G)=n-1$ then  $m > \frac{n^2-n-n \sqrt{n}}{2}$.  
\end{theorem}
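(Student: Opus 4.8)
The plan is to work in the complement of $G$. Since $K_n$ has star chromatic number $n$, any graph with $\chi_s(G)=n-1$ is non-complete, so Lemma~\ref{lem-free} applies and $G$ is $(I_3,2K_2)$-free. Complementing, $\overline{G}$ contains neither $K_3$ (the complement of $I_3$) nor an induced $C_4$ (the complement of $2K_2$); moreover, because $\overline{G}$ is triangle-free, any $4$-cycle in it would be chordless and hence induced, so $\overline{G}$ in fact contains no cycle of length $3$ or $4$ at all, i.e.\ it has girth at least $5$ (or is acyclic). The goal then reduces to an upper bound on $|E(\overline{G})|$ for such a graph, which translates directly into a lower bound on $m=\binom{n}{2}-|E(\overline{G})|$.

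For the edge bound I would use the classical Moore-type counting. Fix a vertex $v\in V(\overline{G})$ with $\deg_{\overline{G}}(v)=d_v$. Triangle-freeness forces $N(v)$ to be independent, and the absence of $4$-cycles forces the sets $N(u)\setminus\{v\}$, over $u\in N(v)$, to be pairwise disjoint and disjoint from $N[v]$; counting the vertices within distance $2$ of $v$ gives $n\ge 1+d_v+\sum_{u\in N(v)}(d_u-1)$. Summing this over all $v$, and noting that each vertex $w$ contributes its term $d_w-1$ once for every neighbour of $w$, the cross term becomes $\sum_{w}d_w^2-2|E(\overline{G})|$, and the whole estimate collapses to $\sum_{w}d_w^2\le n(n-1)$. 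Cauchy--Schwarz then yields $\bigl(2|E(\overline{G})|\bigr)^2=\bigl(\sum_{w}d_w\bigr)^2\le n\sum_{w}d_w^2\le n^2(n-1)$, that is $|E(\overline{G})|\le \tfrac12\,n\sqrt{n-1}$.

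Plugging this in,
$$ m \;=\; \binom{n}{2}-|E(\overline{G})| \;\ge\; \frac{n(n-1)}{2}-\frac{n\sqrt{n-1}}{2} \;=\; \frac{n^2-n-n\sqrt{n-1}}{2} \;>\; \frac{n^2-n-n\sqrt{n}}{2}, $$
the last step being strict because $\sqrt{n-1}<\sqrt{n}$. I do not expect a genuine obstacle here. The two points needing care are: first, the passage to the complement, specifically checking that ``$2K_2$-free'' forces $\overline{G}$ to avoid \emph{every} $4$-cycle and not merely induced ones (this is exactly where triangle-freeness of $\overline{G}$ is used); and second, verifying that the inequality $n\ge 1+d_v+\sum_{u\in N(v)}(d_u-1)$ and its summation remain valid when $\overline{G}$ is disconnected or has isolated or pendant vertices --- which they do, since the counting argument only concerns distances from a single vertex and the inequality is trivially true when $d_v\in\{0,1\}$.
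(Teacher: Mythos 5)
Your proof is correct and takes essentially the same route as the paper: pass to the complement, observe that $(I_3,2K_2)$-freeness of $G$ (via Lemma~\ref{lem-free}) makes $\overline{G}$ free of triangles and of all $4$-cycles, bound $|E(\overline{G})|\le \frac{n\sqrt{n-1}}{2}$, and subtract from $\binom{n}{2}$. The only difference is that the paper simply cites this extremal bound for $(C_3,C_4)$-free graphs, whereas you re-derive it by the standard Moore-type neighborhood counting plus Cauchy--Schwarz, which makes your argument self-contained but not conceptually different.
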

\begin{proof}
Let $G$ be a graph with $\chi_s(G)=n-1$. We know from Lemma~\ref{lem-free}, $G$ is $(I_3,2K_2)$-free. Then the complement $\overline {G}$ of $G$ is $(C_3,C_4)$-free. In~\cite{van2001course} it is showed that the upper bound on the maximum number of edges in a $(C_3,C_4)$-free graph  is $\frac{n \sqrt{n-1}}{2}$. Hence, the minimum number of edges in a graph having star chromatic number $(n-1)$ is at least $ {n \choose 2} -  \frac{n \sqrt{n-1}}{2} > \frac{n^2-n-n \sqrt{n}}{2}$. 

% 
% We use the property that $(n-1)$-star chromatic graphs are $(I_3,2K_2)$-free. That is we find a lower bound on number of edges of a $n$-vertex graph which is $(I_3,2K_2)$-free. Let $G$ be a graph having $n$-vertices. If $G$ has more than two connected components then $G$ have an $I_3$ (pick one vertex from each component). Therefore $G$ can have at most two components say $C_1$ and $C_2$. Next, observe that $C_1$ and $C_2$ are cliques in $G$, suppose not let $u, v \in C_1$ such that $uv \notin E(G)$ then picking $u,v$ from $C_1$ and any arbitrary vertex from $C_2$ forms a $I_3$. Therefore both $C_1$ and $C_2$ have to be cliques. By adding an additional edge between $C_1$ and $C_2$, we can make $G$ connected and $G$ will have minimum number of edges when $|C_1|$ and $|C_2|$ are differ by at most one.   Therefore as $G$ is $I_3$-free, $m> \lfloor n/2 \rfloor \lceil (n-2)/2 \rceil$.
% 
% To make $G$, $2K_2$-free we have to add additional edges. As $|C_1|$ and $|C_2|$ differ by at most one. Both $C_1$ and $C_2$ contain a  matching of size $\lfloor n/4 \rfloor$. To make $G$, $2K_2$-free, we have to add at least $(\lfloor n/4 \rfloor) ^2$ edges between $C_1$ and $C_2$. Combining both we get $m >\lfloor n/2 \rfloor \lceil (n-2)/2 \rceil + (\lfloor n/4 \rfloor)^2>n(n-3)/4$.  If $m <n(n-3)/4$ then $G$ will either have a $I_3$ or a $2K_2$ and hence it is not $(n-1)$-critical. 
\end{proof}

\begin{corollary}
 Let $G$ be a graph on $n \geq 5$ vertices and $m$ edges. If $G$ is $(n-1)$-critical then
 
 $$ \frac{n^2-n-n \sqrt{n}}{2} < m\leq(n-1)(n-2)/2$$
\end{corollary}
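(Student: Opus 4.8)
The plan is to assemble the two inequalities directly from the two theorems proved immediately above, using only the fact that an $(n-1)$-critical graph $G$ satisfies $\chi_s(G)=n-1$ by definition (and is therefore non-complete, since $\chi_s(K_n)=n>n-1$).

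\emph{Lower bound.} First I would invoke the theorem stating that every graph on $n\ge 5$ vertices with $\chi_s(G)=n-1$ has more than $\frac{n^2-n-n\sqrt n}{2}$ edges, and apply it to our $(n-1)$-critical graph $G$; this gives $m>\frac{n^2-n-n\sqrt n}{2}$ at once. No new argument is required: all the content is already packed into that theorem, namely that $\overline G$ is $(C_3,C_4)$-free by Lemma~\ref{lem-free}, together with the classical bound $\frac{n\sqrt{n-1}}{2}$ on the number of edges of a $(C_3,C_4)$-free graph.

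\emph{Upper bound.} For the remaining inequality I would argue by contraposition, using the theorem asserting that if $\chi_s(G)=n-1$ and $m>(n-1)(n-2)/2$ then $G$ is not $(n-1)$-critical. Since our $G$ is $(n-1)$-critical and has $\chi_s(G)=n-1$, the hypothesis $m>(n-1)(n-2)/2$ cannot hold, so $m\le (n-1)(n-2)/2$, which is exactly the claimed upper bound.

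\emph{Main obstacle.} There is essentially no obstacle left, since both bounds are immediate consequences of results already established (the induction on the number of vertices for the upper bound, and the extremal bound for $(C_3,C_4)$-free graphs for the lower bound). The only point I would take care to add is a remark on tightness: the horn graph $H_n$ is $(n-1)$-critical and a direct count gives $|E(H_n)|=\binom{n-1}{2}=(n-1)(n-2)/2$, so the upper bound is attained. Deciding whether the lower bound is tight — equivalently, whether some $(n-1)$-critical graph has a complement that is an extremal (or near-extremal) $(C_3,C_4)$-free graph — is not needed for the corollary but is a natural next question.
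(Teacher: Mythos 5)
Your proposal is correct and follows exactly the route the paper intends: the lower bound is the preceding theorem on graphs with $\chi_s(G)=n-1$ applied directly, and the upper bound is the contrapositive of the theorem stating that $\chi_s(G)=n-1$ together with $m>(n-1)(n-2)/2$ rules out $(n-1)$-criticality. Your added remark on tightness via the horn graph $H_n$ likewise matches the paper's own observation.
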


\begin{remark}
The upper bound given in the above corollary is tight as horn graph $H_n$ is $(n-1)$-critical and contains $(n-1)(n-2)/2$ edges. 
\end{remark}

\section{$(n-2)$-critical graphs}\label{sec:n-2}

In this section, we give a characterization of $(n-2)$-critical graphs with respect to  star coloring.

\begin{lemma}\label{lem-free-n-2}
 Let $G$ be a  graph on $n \geq 5$ vertices and $G$ contains either an $I_3$ or $2K_2$. $\chi_s(G)=n-2$ if and only if $G$ is $(I_4, 2K_2+K_1, P_3+P_2)$-free. 
\end{lemma}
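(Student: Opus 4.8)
The plan is to mirror the proof of Lemma~\ref{lem-free}. First I would reformulate the statement so that both directions become visible at once. Since $G$ contains an $I_3$ or a $2K_2$ it is in particular non-complete, so by Lemma~\ref{lem-free} the hypothesis ``$G$ contains an $I_3$ or a $2K_2$'' is simply a restatement of $\chi_s(G)\le n-2$. Hence the lemma is equivalent to the clean claim that $\chi_s(G)\le n-3$ if and only if $G$ contains one of $I_4$, $2K_2+K_1$, $P_3+P_2$ as an induced subgraph, and it suffices to prove this. Indeed, the forward direction of the lemma is then the contrapositive statement ``if $G$ contains one of the three, then $\chi_s(G)\le n-3<n-2$'', and the reverse direction is ``if $G$ is free of all three, then $\chi_s(G)\ge n-2$, hence $\chi_s(G)=n-2$''.

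For the ``if'' half of the reformulated claim I would exhibit an explicit $(n-3)$-star colouring in each case. If $G$ has an induced $I_4$, colour its four vertices alike and give the remaining $n-4$ vertices fresh distinct colours. If $G$ has an induced $2K_2+K_1$ on $\{a,b,c,d,e\}$ with edges $ab,cd$, use colour classes $\{a,c,e\}$ and $\{b,d\}$ and fresh distinct colours elsewhere; both classes are independent precisely because the copy is induced. If $G$ has an induced $P_3+P_2$ consisting of a path with middle vertex $b$ and ends $a,c$ together with an edge $de$, use classes $\{a,c,d\}$ and $\{b,e\}$ and fresh distinct colours elsewhere. Each colouring is proper and uses $n-3$ colours, and it is a star colouring by a one-line check: any two of its colour classes either both sit inside the (four or five) special vertices, whose induced subgraph is $P_4$-free, or one of them is a singleton, in which case their union induces a star; in neither situation is a $P_4$ two-coloured.

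For the ``only if'' half, assume $f$ is a star colouring of $G$ with at most $n-3$ colours, so the total excess $\sum_{C}(|C|-1)=n-(\text{number of colour classes})$ is at least $3$. I would argue by cases on class sizes. If some class has $\ge 4$ vertices it is an $I_4$. Otherwise, if some class $C_1$ has exactly three vertices, the excess forces a further class $C_2$ with $|C_2|\ge 2$; since $G$ is $I_4$-free every vertex of $C_2$ has a neighbour in $C_1$ and conversely, and since $f$ is a star colouring $G[C_1\cup C_2]$ is bipartite and $P_4$-free, hence a disjoint union of stars with no isolated vertex on a $(3,3)$- or $(3,2)$-bipartition. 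A short enumeration of the possible star-forest shapes (the stars must be small because both sides are small, and two vertices of the same class cannot be adjacent) shows that five of these vertices always induce $P_3+P_2$ or $2K_2+K_1$. In the remaining case every class has size at most two, so there are at least three independent pairs $C_1,C_2,C_3$; here each $G[C_i\cup C_j]$ is a star forest with one or two edges (at least one by $I_4$-freeness, at most two since three edges on a $(2,2)$-bipartition force a $P_4$), and one inspects the edge patterns on the six vertices to locate one of the three forbidden induced subgraphs.

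The main obstacle is exactly this last case. When all colour classes are small the forbidden configurations are no longer confined to one pair of classes, so the argument must combine the star-colouring constraint and $I_4$-freeness across all three pairs $C_1,C_2,C_3$ at once; moreover several six-vertex patterns are consistent with every local constraint (most notably an induced $6$-cycle carrying its antipodal $3$-colouring), and each such pattern has to be singled out and examined on its own. Carrying out this finite but intricate case check, together with verifying the few relevant graphs on five and six vertices by hand, is the delicate part of the proof; the earlier cases are routine.
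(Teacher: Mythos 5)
Your overall strategy mirrors the paper's: explicit $(n-3)$-colourings for the forward direction, and a case analysis on colour-class sizes for the reverse. The genuine gap is exactly the step you defer. In the case of three colour classes of size two you assert that a finite inspection of the six-vertex edge patterns ``locates one of the three forbidden induced subgraphs'', but that inspection cannot succeed. The configuration you yourself name --- the six vertices inducing $C_6$ with the colour classes being the three antipodal pairs --- satisfies every constraint available to you: each pair of classes spans exactly two edges whose union induces a $2K_2$, so there is no bicoloured $P_4$, and no vertex violates $I_4$-freeness. Yet $C_6$ contains none of $I_4$, $2K_2+K_1$, $P_3+P_2$ as an induced subgraph (its independence number is $3$, and the induced subgraph on any five of its vertices is $P_5$, which is neither $2K_2+K_1$ nor $P_3+P_2$). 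So in this branch there is simply nothing to find, and no enumeration of subcases can close the argument.

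This is not merely a presentational difficulty: the statement you are trying to prove in the reverse direction is false there. Take $G=C_6$ with $n=6$: the colouring $1,2,3,1,2,3$ around the cycle is a star colouring, so $\chi_s(C_6)=3=n-3$, while $C_6$ contains an $I_3$ and a $2K_2$ and is $(I_4,2K_2+K_1,P_3+P_2)$-free; joining universal vertices to $C_6$ produces the same phenomenon for every $n\geq 6$. You should also know that the paper's own proof of Lemma~\ref{lem-free-n-2} has the identical hole: its Case~2 asserts without verification that the six-vertex subgraph $H$ always contains a forbidden induced subgraph or a bicoloured $P_4$, and $H=C_6$ refutes that assertion. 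The remaining parts of your proposal (the three explicit colourings, the reduction via Lemma~\ref{lem-free}, the class-of-size-$\geq 4$ and size-$3$ cases, and the bound of at most two edges between size-two classes) are correct and essentially agree with the paper, but the proof as a whole cannot be completed without changing the statement to account for the $C_6$ configuration.
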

\begin{proof}
  Let $G$ be a graph such that $\chi_s(G)=n-2$. Suppose $G$ has a $I_4$ then by coloring all the vertices in $I_4$ with one color and all other vertices with distinct colors we get a $(n-3)$-star coloring of $G$, hence $\chi_s(G) \leq n-3$, which is a contradiction, as $\chi_s(G)=n-2$. Suppose $G$ has a $2K_2+K_1$ then by coloring all vertices in $2K_2+K_1$ with two colors and all other vertices with distinct colors we get a $(n-3)$-star coloring of $G$, hence $\chi_s(G) \leq n-3$, which is again a contradiction. Suppose $G$ has a $P_3+P_2$ then by coloring all vertices of $P_3+P_2$ with two colors and all other vertices with distinct colors we get $(n-3)$-star coloring of $G$, hence $\chi_s(G) \leq n-3$, which is again a contradiction.
  Therefore $G$ is $(I_4, 2K_2+K_1, P_3+P_2)$-free. 
 
 For reverse direction, assume that $G$ is $(I_4, 2K_2+K_1, P_3+P_2)$-free. As $G$ contains either an $I_3$ or $2K_2$, we have $\chi_s(G) \leq n-2$. Suppose $\chi_s(G)= \ell $. Let $f$ be a $\ell$-star coloring of $G$. As $G$ is $I_4$-free, we have $|f^{-1}(i)|\leq 3$ for all $i \in [\ell]$. 
 
%  \paragraph{Case~1.} There exists $i, j \in [\ell]$, such that $|f^{-1}(i)|= 3$ and $|f^{-1}(j)|= 3$.
%  
%  
%   Let $f^{-1}(i)=\{u_1, u_2, u_3\}$ and  $f^{-1}(j)=\{v_1, v_2, v_3\}$. Let $H$ be the subgraph of $G$ induced by the vertices  $f^{-1}(i) \cup f^{-1}(j)$. Clearly, $H$ is a disjoint union of stars. If $H$ contains more than one component then it will contain one of $I_4$, $2K_2+K_1$, $P_3+P_2$. As $G$ is $(I_4, 2K_2+K_1, P_3+P_2)$-free, $H$ must be connected. If $H$ is connected, then, we get bi-colored $P_4$, in $G$ with colors $i$ and $j$, which is a contradiction to the fact that $f$ is a star coloring of $G$.
%  
 
 \paragraph{Case~1.} There exists $i, j \in [\ell]$, such that $|f^{-1}(i)|= 3$ and $|f^{-1}(j)|= 2$.

  Let $f^{-1}(i)=\{u_1, u_2, u_3\}$ and  $f^{-1}(j)=\{v_1, v_2\}$. Let $H$ be the subgraph of $G$ induced by the vertices  $f^{-1}(i) \cup f^{-1}(j)$. Clearly, $H$ is a disjoint union of stars. If $H$ contains more than one component then it will contain one of $I_4$, $2K_2+K_1$, $P_3+P_2$ as a subgraph. As $G$ is $(I_4, 2K_2+K_1, P_3+P_2)$-free, hence $H$ must be connected. 
  If $H$ is connected, then, we get bi-colored $P_4$, in $G$ with colors $i$ and $j$, which is a contradiction to the fact that $f$ is a star coloring of $G$.

 \paragraph{Case~2.} There exists $i, j, k \in [\ell]$, such that $|f^{-1}(i)|= 2$, $|f^{-1}(j)|= 2$ and $|f^{-1}(k)|= 2$.

  Let $f^{-1}(i)=\{u_1, u_2\}$,  $f^{-1}(j)=\{v_1, v_2\}$ and  $f^{-1}(k)=\{w_1, w_2\}$. Let $H$ be the subgraph of $G$ induced by the vertices  $f^{-1}(i) \cup f^{-1}(j)\cup f^{-1}(k)$.
 Between any two color classes we can have at most two edges, otherwise, we get a bicolored $P_4$. Hence, $H$ can have at most six edges. Also, as $H$ is $I_4$-free, it contains at least three edges. That is, the number of edges in $H$ is at least three and at most six. In each of these cases, $H$ contains either one of $I_4$, $2K_2+K_1$, $P_3+P_2$ as an induced subgraph or a bicolored $P_4$. In each of the cases we get a contradiction to  either $f$ is a star coloring of $G$ or $G$ is $(I_4$, $2K_2+K_1$, $P_3+P_2)$-free.

 Combing the above two cases, one of the following must hold.

 \begin{enumerate}
  \item If there exists $i \in [\ell]$ such that $|f^{-1}(i)|=3$ then $|f^{-1}(j)|=1$ for every $j\in [\ell]\setminus\{i\}$. In this case clearly $f$ uses at least $n-2$ colors.
  
  \item If there exists $i,j \in [\ell]$ such that $|f^{-1}(i)|= 2$ and $|f^{-1}(j)|= 2$ then $|f^{-1}(k)|=1$ for every $k\in [\ell]\setminus\{i,j\}$ . In this case also  $f$ uses at least $n-2$ colors.
 \end{enumerate}
 
 We conclude that any star coloring $f$ of $G$ uses at least $n-2$ colors. Hence we get $\chi_s(G)=n-2$.   \qed

\end{proof}
\begin{corollary}
 Let $G$ be a  graph on $n \geq 5$ vertices and $G$ contains either an $I_3$ or $2K_2$. $\chi_s(G)=n-2$ if and only if
$G$ is $(I_4, 2K_2+K_1, P_3+P_2)$-free and $G-e$ contains one of  $I_4$ or $2K_2+K_1$ or $P_3+P_2$ as an induced subgraph for every edge $e$ of $G$. 
\end{corollary}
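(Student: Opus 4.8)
The plan is to unpack the definition of $(n-2)$-critical and reduce everything to Lemma~\ref{lem-free-n-2}, applied once to $G$ and once to each $G-e$. Recall that $G$ is $(n-2)$-critical precisely when $\chi_s(G)=n-2$ and $\chi_s(G-e)<n-2$ for every edge $e\in E(G)$. Since $G$ contains an $I_3$ or a $2K_2$ by hypothesis, Lemma~\ref{lem-free-n-2} immediately gives that $\chi_s(G)=n-2$ is equivalent to $G$ being $(I_4,2K_2+K_1,P_3+P_2)$-free; this disposes of one half of the criticality condition and of the first clause of the claimed characterization.

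For the second half I would first record two easy monotonicity facts. First, deleting an edge never increases the star chromatic number: a star coloring of $G$ is still a proper coloring of $G-e$, and every four-vertex path of $G-e$ is a four-vertex path of $G$, hence still receives at least three colors; therefore $\chi_s(G-e)\le\chi_s(G)=n-2$, so $\chi_s(G-e)<n-2$ is the same as $\chi_s(G-e)\neq n-2$. Second, $G-e$ still contains an $I_3$ or a $2K_2$: if $G$ has an induced $I_3$ the same three vertices remain independent in $G-e$; if $G$ has an induced $2K_2$ on $\{a,b,c,d\}$ with edges $ab,cd$, then either $e\notin\{ab,cd\}$ and the same induced $2K_2$ survives, or $e\in\{ab,cd\}$, say $e=ab$, and then $\{a,b,c\}$ (respectively $\{a,c,d\}$ when $e=cd$) is an independent set of size three in $G-e$. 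Consequently Lemma~\ref{lem-free-n-2} applies verbatim to the $n$-vertex graph $G-e$.

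Combining these, for a fixed edge $e$ we get: $\chi_s(G-e)<n-2$ holds iff $\chi_s(G-e)\neq n-2$ iff, by Lemma~\ref{lem-free-n-2} applied to $G-e$, the graph $G-e$ is \emph{not} $(I_4,2K_2+K_1,P_3+P_2)$-free, i.e.\ $G-e$ contains one of $I_4$, $2K_2+K_1$, $P_3+P_2$ as an induced subgraph. Intersecting this over all edges $e$ with the condition obtained in the first paragraph yields exactly the stated equivalence.

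I expect no genuine obstacle; the only point requiring a little care is the second monotonicity fact, where one must check the case in which the deleted edge is one of the two edges of an induced $2K_2$, but this is handled by the observation above that the endpoints of the surviving edge together with one endpoint of the deleted edge form an $I_3$. Everything else is a direct substitution into Lemma~\ref{lem-free-n-2}.
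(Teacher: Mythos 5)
Your proof is correct and takes essentially the route the paper intends: the corollary is meant as an immediate consequence of Lemma~\ref{lem-free-n-2} applied to $G$ and to each $G-e$, combined with the monotonicity of $\chi_s$ under edge deletion and the check that $G-e$ still contains an $I_3$ or a $2K_2$ (you rightly read the statement as characterizing $(n-2)$-criticality, which is the intended meaning as announced in the introduction, the displayed ``$\chi_s(G)=n-2$'' being a slip). The paper leaves these routine verifications implicit; your write-up simply makes them explicit, including the one point that genuinely needs care, namely that deleting an edge of an induced $2K_2$ leaves an $I_3$.
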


 In Section~\ref{sec:n-1}, we have showed that $(n-1)$-critcal graphs under proper coloring does not exist, however there exists graphs which are $(n-2)$-critical under proper coloring. For example, $C_5$ is $(n-2)$-critical. In general, given any $n \geq 5$, we can construct a $(n-2)$-critical graph $G_n$ as follows. Intially set $G_5=C_5$ and for $i \geq 6$, $G_{i}$ is obtained from $G_{i-1}$ by adding a new vertex $v_i$ adjacent to all the vertices of $G_{i-1}$. It is easy to see that $G_n$ is $(n-2)$-critical.

 Next, we provide a class of $(n-2)$-critical with respect to star coloring.  We call them as double-horn graphs.

\subsection{Double horn graphs: a class of $(n-2)$-critical graphs} 
 
 For $n \geq 5$, a double horn graph $D_n$ on $n$-vertices is defined as follows.
$V(D_n)=\{v_1,v_2, \ldots, v_n\}$
and 
$E(D_n)=\{v_iv_j~|~ i, j \in [n-4], i \neq j\} \cup \{v_{n-3}v_i~|~ i \in [n-5]\} \cup \{v_{n-2}v_i~|~ i \in \{2,3, \ldots, n-4\}\}\cup\{v_{n-2}v_{n-3},v_{n-1}v_{n-3}, v_{n}v_{n-2}\}$

% $$E(D_n)=\{v_iv_j~|~ i, j \in [n-4], i \neq j\} \cup \{v_{n-3}v_i~|~ i \in [n-4]\} \cup \{v_{n-2}v_i~|~ i \in \{2,3, \ldots, n-4\}$$

\begin{lemma}
Let $D_n$ be a double-horn graph then $\chi_s(D_n)=n-2$.
\end{lemma}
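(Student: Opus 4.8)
The plan is to invoke Lemma~\ref{lem-free-n-2}. Since that lemma requires $D_n$ to contain an $I_3$ or a $2K_2$, and then characterises $\chi_s(D_n)=n-2$ by $(I_4,2K_2+K_1,P_3+P_2)$-freeness, it suffices to (i) exhibit an induced $I_3$ in $D_n$ and (ii) check that $D_n$ is $(I_4,2K_2+K_1,P_3+P_2)$-free. The preliminary step is to read the local structure of $D_n$ off the definition of $E(D_n)$: writing $A=\{v_1,\dots,v_{n-4}\}$, one verifies that $A$ is a clique, that $N(v_{n-1})=\{v_{n-3}\}$ and $N(v_n)=\{v_{n-2}\}$, that $v_{n-3}$ is adjacent to every other vertex except $v_{n-4}$ and $v_n$, and that $v_{n-2}$ is adjacent to every other vertex except $v_1$ and $v_{n-1}$. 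In particular $\{v_1,v_{n-1},v_n\}$ is an independent set, so $D_n$ contains an induced $I_3$ and the first hypothesis of Lemma~\ref{lem-free-n-2} is met.

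For $I_4$-freeness I would bound $\alpha(D_n)$. An independent set contains at most one vertex of the clique $A$, and among $\{v_{n-3},v_{n-2},v_{n-1},v_n\}$, using $v_{n-3}\sim v_{n-2}$, $v_{n-3}\sim v_{n-1}$ and $v_{n-2}\sim v_n$, a short case check shows that no three of these four vertices are pairwise non-adjacent. Hence $\alpha(D_n)\le 3$ and $D_n$ has no induced $I_4$.

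For the remaining two forbidden graphs it is enough to prove that $D_n$ is $2K_2$-free, since both $2K_2+K_1$ and $P_3+P_2$ contain $2K_2$ as an induced subgraph (delete the isolated vertex of $2K_2+K_1$; in $P_3+P_2$ take one end-edge of the $P_3$ together with the $P_2$). To see $D_n$ has no induced $2K_2$: any edge incident with $v_{n-3}$ fails to be one of the two edges of an induced $2K_2$, because the second edge would have to use vertices non-adjacent to $v_{n-3}$, i.e.\ it would be $v_{n-4}v_n$, which is not an edge; symmetrically for $v_{n-2}$, whose only non-neighbours $v_1,v_{n-1}$ are non-adjacent. Every remaining edge lies inside $A$, and two edges inside a clique either share an endpoint or have all four endpoints pairwise adjacent, so they never induce a $2K_2$. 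Thus $D_n$ is $(I_4,2K_2+K_1,P_3+P_2)$-free, and Lemma~\ref{lem-free-n-2} yields $\chi_s(D_n)=n-2$.

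The main obstacle I expect is purely bookkeeping: pinning down the exact neighbourhoods $N(v_{n-3})$ and $N(v_{n-2})$, and hence their two-element non-neighbour sets, from the somewhat intricate description of $E(D_n)$, and separately handling the degenerate cases $n=5$ and $n=6$, where $A$ has only one or two vertices and several index ranges in the definition collapse to the empty set. If a self-contained argument is preferred for the easy direction, $\chi_s(D_n)\le n-2$ can be shown directly by colouring $v_1,v_{n-1},v_n$ with one colour and the other $n-3$ vertices with distinct colours: this is proper and its only non-singleton colour class has size three, so no two colour classes induce a bicoloured $P_4$.
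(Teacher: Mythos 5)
Your proof is correct, but it takes a genuinely different route from the paper. The paper establishes the lower bound $\chi_s(D_n)\geq n-2$ by a direct coloring argument: it fixes an assumed $(n-3)$-star coloring, normalizes it on the clique $\{v_1,\dots,v_{n-4}\}$, and does a case analysis on the color of $v_{n-3}$ (it can only be $n-4$ or $n-3$), deriving a bicolored $P_4$ in each case; the upper bound is then shown by exhibiting an explicit $(n-2)$-star coloring. You instead reduce the whole statement to the characterization in Lemma~\ref{lem-free-n-2}, which is legitimate since that lemma is proved before the double-horn subsection: you verify that $\{v_1,v_{n-1},v_n\}$ is an induced $I_3$ (so the lemma's hypothesis holds), that $\alpha(D_n)\leq 3$ (at most one vertex from the clique plus at most two from $\{v_{n-3},v_{n-2},v_{n-1},v_n\}$, whose complement on those four vertices is triangle-free), and that $D_n$ is $2K_2$-free — which suffices for $(2K_2+K_1)$- and $(P_3+P_2)$-freeness since both contain an induced $2K_2$ — via the observation that the two non-neighbours of $v_{n-3}$ (namely $v_{n-4},v_n$) and of $v_{n-2}$ (namely $v_1,v_{n-1}$) are non-adjacent, and all remaining edges lie in the clique. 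All of these neighbourhood claims check out against the definition of $E(D_n)$, including the degenerate cases $n=5,6$. What your approach buys is a shorter, more structural argument that avoids the coloring case analysis and additionally records that $\alpha(D_n)=3$ and $D_n$ is $2K_2$-free; what the paper's approach buys is self-containedness and an explicit optimal coloring (which you also sketch at the end, so your write-up covers both).
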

\begin{proof}
 First observe that $U=\{v_1, \ldots, v_{n-4}\}$ induces a clique of size $n-4$ in $D_n$, that is $\chi_s(D_n) \geq n-4$. 
 Suppose $\chi_s(D_n) =  n-3$, let $f$ be a $(n-3)$- star coloring of $D_n$. With out loss of generality assume that $f(v_i)=i$ for $i \in [n-4]$.

\begin{figure}[t]
\centering

\includegraphics[trim=6cm 12.5cm 6cm 11cm, clip=true, scale=1]{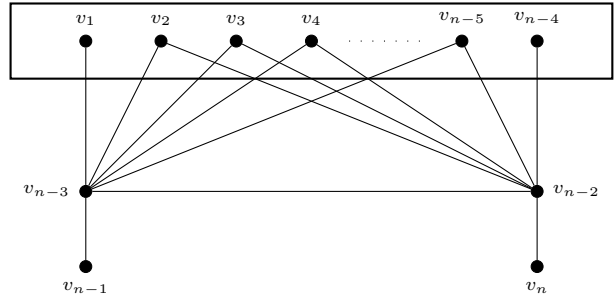}
	
\caption{Double-horn graph $D_n$. The vertices $\{v_1,v_2,\ldots, v_{n-4}\}$ forms a clique of size $n-4$.}
\end{figure}

 \paragraph{Case~1.} $f(v_{n-3})=n-4$.
 
 As $v_{n-2}$ is adjacent to $v_2, v_3 \ldots, v_{n-4}  $, so $f(v_{n-2}) \neq i$, for any $ i \in \{2, \ldots, n-4\}$. If $f(v_{n-2})=1$, then we get bicolored $P_4$ with colors $1$ and $n-4$. Hence $f(v_{n-2})=n-3$. If $f(v_{n-1})= p$, where $p \in [n-3], p \neq n-4$, then we get a bicolored $P_4$ with colors $p$ and $n-4$, which is a contradiction to the fact that $f$ is a $(n-3)$-star coloring of $D_n$.

  \paragraph{Case~2.} $f(v_{n-3})=n-3$.

 As $v_{n-2}$ is adjacent to $v_2, v_3 \ldots, v_{n-4}, v_{n-3}  $, so $f(v_{n-2}) \neq i$, for any $ i \in \{2, \ldots, n-4, n-3\}$. Hence $f(v_{n-2})=1$.  If $f(v_{n})= p$, where $p \in [n-3], p \neq 1$, then we get a bicolored with colors $p$ and $1$, which is a contradiction to the fact that $f$ is a $(n-3)$-star coloring of $D_n$.
 
From the above two cases, we conclude that  $\chi_s(D_n) \geq n-2$.
 
 Next, we give a $(n-2)$-star coloring of $D_n$. Let $g: V(D_n) \rightarrow [n-2]$ defined as follows. 
$g(v_i)=i$ for $i \in [n-4]$, $g(v_{n-3})=n-4$, $g(v_{n-1})=g(v_{n})=n-3$, and $g(v_{n-2})=n-2$. 
Observe that $|g^{-1}(n-4)|=|g^{-1}(n-3)|=2$ and $|g^{-1}(i)|=1$ for $i \in [n]\setminus\{ n-3, n-4\}$. Clearly $g$ is a $(n-2)$-star coloring of $D_n$, as there is no $P_4$ containing the vertices that are colored with $n-3$ and $n-4$.  Hence $\chi_s(D_n) = n-2$.
\end{proof}

\begin{lemma}
Let $D_n$ be an double-horn graph then $D_n$ is $(n-2)$-critical.
\end{lemma}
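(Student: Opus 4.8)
The strategy is to show, for each edge $e$ of $D_n$, that $D_n - e$ contains one of the forbidden induced subgraphs $I_4$, $2K_2+K_1$, or $P_3+P_2$; by Lemma~\ref{lem-free-n-2} (more precisely its corollary), this gives $\chi_s(D_n-e) = n-3 < n-2$ for every $e$, and since $\chi_s(D_n) = n-2$ by the previous lemma, $D_n$ is $(n-2)$-critical. First I would note that $D_n$ contains $I_3$ (e.g.\ $\{v_{n-1}, v_n, v_1\}$ or a similar independent triple among the pendant-ish vertices $v_{n-1}, v_n$ and a clique vertex not adjacent to them), so the hypothesis of Lemma~\ref{lem-free-n-2} is satisfied for $D_n - e$ as well once we exhibit one of the three obstructions; alternatively I would just directly invoke the corollary, which already packages this.

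**Case analysis over edge types.** The edge set of $D_n$ splits naturally into a handful of orbits, and I would handle them one at a time, each time writing down an explicit small induced subgraph of $D_n - e$. The types are: (i) clique edges $v_iv_j$ with $i,j \in [n-4]$; (ii) edges $v_{n-3}v_i$ with $i \in [n-5]$; (iii) edges $v_{n-2}v_i$ with $i \in \{2,\ldots,n-4\}$; (iv) the edge $v_{n-2}v_{n-3}$; (v) the edge $v_{n-1}v_{n-3}$; (vi) the edge $v_nv_{n-2}$. For type (i), deleting $v_iv_j$ makes $v_i,v_j$ nonadjacent; since $D_n-e$ still contains the vertices $v_{n-1}$ and $v_n$ which are nonadjacent to each other and can be chosen nonadjacent to $v_i,v_j$ (using that $v_{n-1}$ is adjacent only to $v_{n-3}$ and $v_n$ only to $v_{n-2}$), $\{v_i, v_j, v_{n-1}, v_n\}$ or a slight variant is an $I_4$. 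For the pendant-type edges (v) and (vi): deleting $v_{n-1}v_{n-3}$ isolates $v_{n-1}$, and $\{v_{n-1}\}$ together with a $2K_2$ found among the remaining structure (two disjoint edges, e.g.\ $v_nv_{n-2}$ and a clique edge not meeting it) yields $2K_2+K_1$; symmetrically for $v_nv_{n-2}$. For the ``spoke'' edges (ii) and (iii) and the edge (iv), I would similarly locate either an $I_4$ (if the deletion creates a large independent set using $v_{n-1}, v_n$ and two newly-nonadjacent vertices) or a $P_3+P_2$ (a path on three vertices sitting in the clique part, disjoint from the edge $v_nv_{n-2}$ or $v_{n-1}v_{n-3}$).

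**Main obstacle.** The conceptual content is trivial — it is all bookkeeping — so the real work is organizing the case split cleanly and, for each edge, naming a correct witness subgraph and verifying it is \emph{induced} (not merely a subgraph), which is where off-by-one adjacencies in the definition of $D_n$ are easy to get wrong. The subtlest cases are the spokes of $v_{n-3}$ and $v_{n-2}$ near the ``boundary'' indices (small $i$, or $i$ near $n-4$), and the three special edges, because there the two ``horns'' $\{v_{n-1}\}$ and $\{v_n\}$ interact with the clique in an asymmetric way; I would double-check each witness against the full adjacency list of $D_n$. Once every edge type yields one of $I_4$, $2K_2+K_1$, $P_3+P_2$ as an \emph{induced} subgraph of $D_n-e$, the corollary to Lemma~\ref{lem-free-n-2} finishes the proof.
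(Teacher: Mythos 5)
Your overall strategy is the same as the paper's: exhibit, for every edge $e$, one of $I_4$, $2K_2+K_1$, $P_3+P_2$ as an induced subgraph of $D_n-e$, then invoke Lemma~\ref{lem-free-n-2}. However, several of the concrete witnesses you name are wrong, and in the pendant-edge cases they cannot be repaired within the subgraph type you chose. For $e=v_{n-1}v_{n-3}$ you propose a $2K_2+K_1$ built from the isolated $v_{n-1}$ together with $v_nv_{n-2}$ and ``a clique edge not meeting it''; no such induced $2K_2$ exists, because $v_{n-2}$ is adjacent to every clique vertex except $v_1$, so every clique edge has an endpoint adjacent to $v_{n-2}$. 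In fact $D_n-v_{n-1}v_{n-3}$ contains no induced $2K_2$ avoiding $v_{n-1}$ at all (any two disjoint edges of $D_n$ have a cross edge), so a $2K_2+K_1$ is simply unavailable; the correct witness is the $I_4$ on $\{v_{n-1},v_{n-3},v_{n-4},v_n\}$ (using $v_{n-3}v_{n-4}\notin E(D_n)$), and symmetrically $\{v_n,v_{n-2},v_1,v_{n-1}\}$ for $e=v_nv_{n-2}$. Likewise, your sketched $P_3+P_2$ for the spoke edges and for $v_{n-2}v_{n-3}$, with the $P_3$ ``sitting in the clique part'', cannot work: three clique vertices induce a triangle, not an induced $P_3$, and they cannot all be nonadjacent to $v_nv_{n-2}$ or $v_{n-1}v_{n-3}$ since $v_{n-2}$ and $v_{n-3}$ dominate almost all of the clique.

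The paper completes exactly these cases with different witnesses: for $e=v_iv_{n-3}$ with $i\in[n-5]$, the edges $v_{n-3}v_{n-1}$ and $v_iv_{n-4}$ together with $v_n$ form an induced $2K_2+K_1$; symmetrically, for $e=v_iv_{n-2}$ with $i\in\{2,\ldots,n-4\}$, take $v_{n-2}v_n$, $v_iv_1$ and $v_{n-1}$; and for $e=v_{n-2}v_{n-3}$ the $P_3+P_2$ is the path $v_1v_{n-3}v_{n-1}$ together with the edge $v_nv_{n-2}$ --- the $P_3$ crosses from the clique into a horn rather than lying inside the clique. Your clique-edge case (the $I_4$ on $\{v_i,v_j,v_{n-1},v_n\}$) is correct as stated. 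So the architecture of your argument is right, but the witnesses for four of the six edge types are either false or left unspecified, and since this lemma \emph{is} that bookkeeping, the proposal as written does not establish the statement.
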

\begin{proof}
 We show that for every edge $e$ of $D_n$, the graph $D_n-e$ contains one of $I_4$, $2K_2+K_1$ or $P_3+P_2$ as induced subgraph. Then the result follows from Lemma~\ref{lem-free-n-2}.

\paragraph{Case~1.} If $e=v_iv_{n-3}$, $i \in [n-5]$

In this case $v_{n-3}v_{n-1}$, $v_iv_{n-4}$ and $v_n$ forms a $2K_2+K_1$.

\paragraph{Case~2.} If $e=v_iv_{j}$, for $i, j \in [n-4]$, $i \neq j$.

In this case $\{v_i, v_j, v_{n-1}, v_{n}\}$ forms an $I_4$.

\paragraph{Case~3.} If $e=v_{n-2}v_{n-3}$.

In this case the vertices $\{v_{1}, v_{n-3}, v_{n-1}\}$ and $\{v_n,v_{n-2}\}$ forms $P_3+P_2$.

\paragraph{Case~4.} If $e=v_{n-1}v_{n-3}$.

In this case $\{v_{n-1}, v_{n-3}, v_{n-4}, v_{n}\}$ forms an $I_4$. 

The case when $e=v_{n}v_{n-2}$ and $e=v_iv_{n-2}$, $i \in \{2,3,\ldots, n-4\}$ are similar to the Cases $4$ and $1$ respectively.
 
Hence, for any edge $e$ of $D_n$, the graph $D_n -e$ contains one of $I_4$, $2K_2+K_1$ or $P_3+P_2$ as an induced subgraph. That is $\chi_s(D_n -e)<n-2$.
Therefore  $D_n$ is $(n-2)$-critical.
\qed
\end{proof}

\subsection{Minimum and maximum number of edges in $(n-2)$-critical graphs}

\begin{lemma}\label{lem-upper-n-2-critical}
 Let $G$ be a graph on $n \geq 5$ vertices and $m$ edges. If $\chi_s(G)=n-2$ then  $m  \geq \frac{n(n-3)}{6}$.  
\end{lemma}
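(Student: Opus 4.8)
The plan is to reduce the statement to Tur\'an's theorem applied to the complement graph $\overline G$. First I would observe that $G$ cannot be complete, since $\chi_s(K_n)=n\ne n-2$, so Lemma~\ref{lem-free} applies to $G$; as $\chi_s(G)=n-2\ne n-1$, that lemma forces $G$ to contain an induced $I_3$ or an induced $2K_2$. Hence the hypotheses of Lemma~\ref{lem-free-n-2} are satisfied, and since $\chi_s(G)=n-2$ we conclude that $G$ is $(I_4,2K_2+K_1,P_3+P_2)$-free. The only consequence I will actually use is that $G$ is $I_4$-free, i.e.\ $G$ has no independent set of size four.

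Translating to the complement, the statement ``$G$ is $I_4$-free'' says precisely that $\overline G$ contains no $K_4$. Now I would invoke Tur\'an's theorem: a $K_4$-free graph on $n$ vertices has at most $\bigl(1-\tfrac13\bigr)\tfrac{n^2}{2}=\tfrac{n^2}{3}$ edges, the extremal graph being the balanced complete $3$-partite graph $T(n,3)$. Therefore $|E(\overline G)|\le n^2/3$.

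Finally I would just count complementary edges: $m=\binom n2-|E(\overline G)|\ge \frac{n(n-1)}{2}-\frac{n^2}{3}=\frac{3n(n-1)-2n^2}{6}=\frac{n^2-3n}{6}=\frac{n(n-3)}{6}$, which is the claimed lower bound.

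I do not expect a genuine obstacle here: the content lies entirely in recognising that $I_4$-freeness alone already controls the edge count via complementation and Tur\'an. The only points needing a line of care are confirming that the Tur\'an bound specialises to the clean value $n^2/3$ for every $n$ (with equality exactly when $3\mid n$), and checking that the chain of lemma applications is legitimate, namely that $G$ is non-complete and contains an induced $I_3$ or $2K_2$, so that Lemma~\ref{lem-free-n-2} may indeed be applied.
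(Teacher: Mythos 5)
Your proof is correct and follows essentially the same route as the paper: apply Lemma~\ref{lem-free-n-2} to get $I_4$-freeness, pass to the complement (which is then $K_4$-free), apply Tur\'an's theorem to bound $|E(\overline G)|\le n^2/3$, and count complementary edges to obtain $m\ge \binom n2-\frac{n^2}{3}=\frac{n(n-3)}{6}$. Your additional check that $G$ is non-complete and contains an $I_3$ or $2K_2$ (via Lemma~\ref{lem-free}) so that Lemma~\ref{lem-free-n-2} is legitimately applicable is a welcome bit of care that the paper's proof leaves implicit.
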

\begin{proof}
Let $G$ be a graph with $\chi_s(G)=n-1$. We know from Lemma~\ref{lem-free-n-2}, $G$ is $(I_4,2K_2+K_1, P_3+P_2)$-free. Then, the complement $\overline {G}$ of $G$ is $K_4$-free. Using Tur$\acute{a}$n's~\cite{aigner1995turan} theorem, the number of edges in $\overline {G}$ is at most $\frac{n^2}{3}$. Hence, the minimum number of edges in a graph having star chromatic number $(n-2)$ is at least $ {n \choose 2} -  \frac{n^2}{3} \geq \frac{n(n-3)}{6}$. 
%That is we find a lower bound on number of edges of a $n$-vertex graph which is $(I_3,2K_2+K_1, P_3+P_2)$-free. Let $G$ be a graph having $n$-vertices. If $G$ has more than three connected components then $G$ have an $I_4$ (pick one vertex from each component). Therefore, $G$ can have at most three components. 

% 
% \paragraph{Case~1.} $G$ has exactly three components.
% 
% Let $C_1$, $C_2$ and $C_3$ be three components of $G$.  
% Next, observe that each $C_i$ is a clique in $G$, suppose $C_1$ is not a clique, let $u, v \in C_1$ such that $uv \notin E(G)$ then picking $u,v$ from $C_1$ and two arbitrary vertices one from $C_2$ and the other from $C_3$ forms a $I_4$. Therefore, each component is a clique.  By adding an additional edges between components, we can make $G$ connected and $G$ will have minimum number of edges when the difference between the sizes of any two components at most one.   Therefore, as $G$ is $I_4$-free, $m> 3. {\lfloor n/3 \rfloor \choose 2}$.

\end{proof}

\begin{lemma}\label{lem-not-n-2-critical}
 Let $G$ be a graph on $n \geq 5$ vertices such that $\chi_s(G)=\Delta(G)= n-2$, then $G$ is not $(n-2)$-critical.
\end{lemma}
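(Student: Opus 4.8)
The plan is to mirror the proof of Lemma~\ref{lem-not-critical}. Let $w$ be a vertex with $deg(w)=\Delta(G)=n-2$ and let $z$ be its unique non-neighbour. Since $\chi_s(G)=n-2\neq n-1$ and $G$ is non-complete, Lemma~\ref{lem-free} forces $G$ to contain an induced $I_3$ or $2K_2$; call this subgraph $F$. Because $w$ has only one non-neighbour in $G$, it cannot sit inside an induced $I_3$ or an induced $2K_2$ (every vertex of either has at least two non-neighbours within it), so $w\notin V(F)$; hence $F$ is untouched by the deletion of any edge incident to $w$, and in particular $G-e$ still satisfies the hypothesis of Lemma~\ref{lem-free-n-2} for every such $e$. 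It therefore suffices to find an edge $e$ incident to $w$ for which $G-e$ is $(I_4,2K_2+K_1,P_3+P_2)$-free: Lemma~\ref{lem-free-n-2} will then give $\chi_s(G-e)=n-2=\chi_s(G)$, so $G$ is not $(n-2)$-critical.

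The key observation concerns what deleting an edge $wv_k$ at $w$ can create. If $G-wv_k$ has an induced copy of $I_4$, $2K_2+K_1$ or $P_3+P_2$ that $G$ does not, then $w$ lies in it — otherwise the copy does not involve the deleted edge and would already be present in $G$. But in $G-wv_k$ the vertex $w$ has exactly two non-neighbours, $z$ and $v_k$, so inside such a copy $w$ is non-adjacent to at most two vertices. In $I_4$ every vertex is non-adjacent to the other three, and in $2K_2+K_1$ every vertex is non-adjacent to at least three others; hence no new $I_4$ or $2K_2+K_1$ can arise, for \emph{any} choice of $v_k$. In $P_3+P_2$, by contrast, only the centre of the $P_3$ is non-adjacent to as few as two vertices (namely the two vertices of the $P_2$), every other vertex being non-adjacent to three; moreover the new copy must actually use the deleted edge, so $v_k$ lies in it as well. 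Hence a new $P_3+P_2$ can occur only with $w$ as the centre of the $P_3$ and the $P_2$-part equal to $\{z,v_k\}$, which in particular requires $z$ to be adjacent to $v_k$.

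This dictates the choice of edge. If $w$ has a neighbour $v_k$ with $v_k\not\sim z$, delete $e=wv_k$: by the previous paragraph $G-e$ contains no new $I_4$, $2K_2+K_1$ or $P_3+P_2$, so it is free of all three and we are done. Otherwise every neighbour of $w$ is adjacent to $z$; since $N_G(w)=V(G)\setminus\{w,z\}$, this means $N_G(z)=V(G)\setminus\{w,z\}$ as well. Delete $e=wv_k$ for an arbitrary neighbour $v_k$ of $w$ (one exists as $n\geq 5$). A would-be new $P_3+P_2$ must, as above, be centred at $w$ with $P_2$-part $\{z,v_k\}$, so its two $P_3$-endpoints lie in $V(G)\setminus\{w,z,v_k\}\subseteq N_G(z)$ and are therefore adjacent to $z$ — contradicting that the copy is induced, since the endpoints of the $P_3$ must be non-adjacent to the $P_2$-part. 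Hence again $G-e$ is free of all three forbidden graphs, so $\chi_s(G-e)=n-2$ and $G$ is not $(n-2)$-critical.

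The one genuinely delicate point — and the reason the proof needs a case split rather than simply ``delete any edge at $w$'' — is the $P_3+P_2$ obstruction: unlike $I_4$ and $2K_2+K_1$, it has a vertex (the centre of the $P_3$) with only two non-neighbours, so a deletion at $w$ can genuinely manufacture it. Neutralising this possibility is precisely what the dichotomy on whether $z$ has a non-neighbour inside $N_G(w)$ is for; the remainder is the same bookkeeping as in Lemma~\ref{lem-not-critical}.
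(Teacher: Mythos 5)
Your proof is correct and takes essentially the same route as the paper: delete a suitably chosen edge at the maximum-degree vertex $w$ and invoke Lemma~\ref{lem-free-n-2}, with your dichotomy on whether $z$ has a non-neighbour among $N(w)$ matching the paper's case split on the degree of $w$'s non-neighbour. You are in fact more explicit than the paper on two points it leaves implicit, namely that $G-e$ still contains an $I_3$ or $2K_2$ (so Lemma~\ref{lem-free-n-2} applies) and why no new $P_3+P_2$ through $w$ can be created by the deletion.
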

\begin{proof}

Let $f$ be a $(n-2)$-star coloring of $G$. Let $w \in V(G)$ be a vertex having the degree $n-2$, that is $w$ is adjacent to every other vertex of $G$, except one vertex (say, $v_k$).

\paragraph{Case~1.} degree of $v_k$ is $n-2$.

Let $H=G - wv_i$, where $i \neq k$. 
We show that $H$ is $(I_4, 2K_2+K_1, P_3+P_2)$-free. If $H$ has an $I_4$ then $w \notin I_4$, as degree of $w$ in $H$ is $n-3$.  Then, the same $I_4$ is also present in $G$, which is a contradiction. If $H$ has a $2K_2+K_1$ (resp. $P_3+P_2$)  then $w \notin V(2K_2+K_1)$ (resp.  $ w \notin V(P_3+P_2)$), which implies there is a $2K_2+K_1$ (resp. $P_3+P_2$) in $G$ which is a contradiction. Therefore $H$ is $(I_4, 2K_2+K_1, P_3+P_2)$-free and from Lemma~\ref{lem-free-n-2}, we get $\chi_s(H)=n-2$.

\paragraph{Case~2.} degree of $v_k$ is at most $n-3$.

Let $v_j$ be a non-neighbor of $v_k$ in $V(G) \setminus \{w\}$.
Let $H=G - wv_j$. Similar to the above case we can easily see that $H$ is $(I_4, 2K_2+K_1, P_3+P_2)$-free. Therefore $\chi_s(H)=n-2$.

Combing the above two cases, we can conclude that $G$ is not $(n-2)$-critical. \qed
\end{proof}

\begin{lemma}\label{lem-not-n-2-critical-2nd}
 Let $G$ be a graph on $n \geq 5$ vertices such that $\chi_s(G)=n-2$ and $\Delta(G)= n-1$, then $G$ is not $(n-2)$-critical.
\end{lemma}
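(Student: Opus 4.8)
The plan is to mimic the structure of the proof of Lemma~\ref{lem-not-n-2-critical}: pick a suitable edge incident on a maximum-degree vertex $w$, delete it, and argue via Lemma~\ref{lem-free-n-2} that the star chromatic number stays at $n-2$. So let $f$ be an $(n-2)$-star coloring of $G$ and let $w$ be a vertex with $\deg(w)=n-1$, i.e.\ $w$ is adjacent to every other vertex. Since $\chi_s(G)=n-2$, some color class has size $\geq 2$; as $w$ dominates the graph, $w$ is the sole vertex of its class, so there are two vertices $x,y$ with $f(x)=f(y)$, both distinct from $w$. I would like to delete an edge $wv$ with $v\notin\{x,y\}$ (possible since $n\geq 5$) and show $H=G-wv$ is $(I_4,2K_2+K_1,P_3+P_2)$-free; then Lemma~\ref{lem-free-n-2} gives $\chi_s(H)=n-2$ — but this requires knowing $H$ still contains an $I_3$ or $2K_2$, which is the first thing to pin down.

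The key structural point is this: because $\chi_s(G)=n-2$ while $w$ has degree $n-1$, the graph $G-w$ must itself be non-complete and in fact must contain an $I_3$ or $2K_2$. Indeed, if $G-w$ were complete then $G=K_n$ and $\chi_s(G)=n$; more carefully, $G-w$ has $n-1$ vertices and $\chi_s(G)\le \chi_s(G-w)+1$, so $\chi_s(G-w)\ge n-3$, and if $\chi_s(G-w)=n-2$ then $G-w$ is non-complete (hence contains an $I_3$ or $2K_2$ by Lemma~\ref{lem-free}), while if $\chi_s(G-w)=n-3$ then $G-w$ is a graph on $n-1$ vertices with star chromatic number two less than its order, so by Lemma~\ref{lem-free-n-2} applied to $G-w$ it is $(I_4,2K_2+K_1,P_3+P_2)$-free and contains an $I_3$ or $2K_2$. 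Either way $G-w$ contains an induced $I_3$ or $2K_2$, and since $w$ is not in any such subgraph and deleting $wv$ does not touch $G-w$, the graph $H=G-wv$ still contains it. That settles the hypothesis of Lemma~\ref{lem-free-n-2} for $H$.

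Next I would show $H$ is $(I_4,2K_2+K_1,P_3+P_2)$-free. Any induced $I_4$ in $H$ avoids $w$ (still degree $n-2$ in $H$, hence not independent from three vertices... careful: $w$ could be independent from one vertex $v$; an $I_4$ containing $w$ would need three vertices nonadjacent to $w$, impossible since $w$ misses only $v$ in $H$), so it is an induced $I_4$ of $G$, contradiction. Likewise any induced $2K_2+K_1$ or $P_3+P_2$ in $H$ either avoids $w$ entirely — giving the same subgraph in $G$ — or uses $w$; if $w$ lies in the $K_2$-part or an edge of $P_3+P_2$, that edge is $wv$ which was deleted, impossible, and if $w$ is the isolated $K_1$ of $2K_2+K_1$ then, since the other four vertices are pairwise... they induce $2K_2$ in $H$, and at most one of them equals $v$, so at least... here one must check $w$ being nonadjacent to one of those four only in $H$ still forces $w$ to have a neighbor among them in $G$, contradicting $K_1$-ness — unless that neighbor is exactly $v$. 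This is the one place needing care, and it is why $v$ must be chosen not merely outside $\{x,y\}$ but, if needed, as a specific vertex (for instance a neighbor of $w$ lying in the large color class, or outside any fixed small induced subgraph). I expect the main obstacle to be exactly this bookkeeping: ensuring the deleted edge $wv$ cannot be "reused" to destroy a forbidden induced subgraph of $G$, which forces a judicious choice of $v$ (and possibly splitting into the cases $\Delta(G-w)=n-2$ versus $\Delta(G-w)\le n-3$, mirroring the two cases in Lemma~\ref{lem-not-n-2-critical}). Once $v$ is fixed appropriately, $H$ is $(I_4,2K_2+K_1,P_3+P_2)$-free and contains an $I_3$ or $2K_2$, so $\chi_s(H)=n-2$ by Lemma~\ref{lem-free-n-2}, and $G$ is not $(n-2)$-critical.
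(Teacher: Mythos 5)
Your overall strategy is the intended one (the paper itself only says the proof is ``similar to Lemma~\ref{lem-not-n-2-critical}''): delete one edge $wv$ at the universal vertex $w$, show that $H=G-wv$ still contains an $I_3$ or a $2K_2$ and is $(I_4,2K_2+K_1,P_3+P_2)$-free, and invoke Lemma~\ref{lem-free-n-2}. However, your write-up does not actually close the freeness check: you end by saying the case where $w$ is the isolated vertex of a $2K_2+K_1$ ``needs care'' and ``forces a judicious choice of $v$'', and you leave it unresolved. In fact no choice of $v$ is needed and all such cases are vacuous, by a single degree count: in $H$ the vertex $w$ has exactly one non-neighbor (namely $v$), whereas every vertex of $I_4$, of $2K_2+K_1$ and of $P_3+P_2$ has at least two non-neighbors inside that subgraph (the isolated vertex of $2K_2+K_1$ has four; a vertex of a $K_2$ or of the $P_2$ has three; an endpoint of the $P_3$ has three; its middle vertex has two). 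Hence $w$ cannot lie in any induced copy of these graphs in $H$, so any such copy lies in $H-w=G-w$ and is already an induced subgraph of $G$, contradicting the $(I_4,2K_2+K_1,P_3+P_2)$-freeness of $G$ guaranteed by Lemma~\ref{lem-free-n-2}. This observation also replaces your garbled subcase reasoning (``that edge is $wv$ which was deleted'' does not justify excluding $w$ from a $K_2$, since that $K_2$-edge is present in $H$) and covers the subcase you never address, namely $w$ as the middle vertex of the $P_3$.

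Two smaller points. Your detour establishing that $G-w$ contains an $I_3$ or $2K_2$ via $\chi_s(G)\le\chi_s(G-w)+1$ and applications of Lemmas~\ref{lem-free} and~\ref{lem-free-n-2} to $G-w$ is unnecessary and, for $n=5$, strays outside the stated hypotheses of those lemmas ($G-w$ then has only four vertices). The direct argument suffices: since $\chi_s(G)=n-2$, $G$ is neither complete nor $(I_3,2K_2)$-free (Lemma~\ref{lem-free}), so $G$ contains an induced $I_3$ or $2K_2$; as $w$ is adjacent to every other vertex it lies in no such subgraph, which therefore survives unchanged in $H$, giving the hypothesis needed for Lemma~\ref{lem-free-n-2}. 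Finally, once the degree count above is in place, any edge incident to $w$ works, so unlike Lemma~\ref{lem-not-n-2-critical} no case distinction on the deleted edge is required here.
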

\begin{proof}
 The proof is similar to the proof of Lemma~\ref{lem-not-n-2-critical}.
\end{proof}

% \begin{lemma}\label{lem-n-2-max-degree}
%  Let $G$ be a graph on $n \geq 5$ vertices and $m$ edges. If $m > (n-2)(n-3)/2$ then $\Delta(G) \geq n-4$. 
% \end{lemma}
% \begin{proof}
%  Using Handshaking lemma, we get 
%  $n \Delta(G) \geq 2m>(n-2)(n-3)$. That is $\Delta(G) >(n-2)(n-3)/n=(n-5)+6/n$. Hence $\Delta(G) \geq n-4$.
% \end{proof}

\begin{theorem}
  Let $G$ be a graph on $n \geq 5$ vertices and $m$ edges. If $G$ is $(n-2)$-critical then
 $$ n(n-3)/6 \leq  m\leq n(n-3)/2$$
\end{theorem}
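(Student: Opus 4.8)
The plan is to derive both inequalities directly from results already proved in this section, so that essentially no new argument is required.

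For the lower bound, I would start from the definition: a $(n-2)$-critical graph $G$ satisfies $\chi_s(G)=n-2$. Since $\chi_s(G)=n-2<n-1$ and $G$ is not complete (a complete graph on $n$ vertices has star chromatic number $n$), Lemma~\ref{lem-free} forces $G$ to contain an $I_3$ or a $2K_2$, so the hypothesis of Lemma~\ref{lem-free-n-2} is met. Hence Lemma~\ref{lem-upper-n-2-critical} applies verbatim and gives $m\geq \frac{n(n-3)}{6}$.

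For the upper bound, the key step is to bound the maximum degree of $G$. Trivially $\Delta(G)\leq n-1$ for any simple graph on $n$ vertices. If $\Delta(G)=n-1$, then by Lemma~\ref{lem-not-n-2-critical-2nd} the graph $G$ is not $(n-2)$-critical, and if $\Delta(G)=n-2$, then by Lemma~\ref{lem-not-n-2-critical} again $G$ is not $(n-2)$-critical; both contradict our assumption on $G$. Therefore $\Delta(G)\leq n-3$. The Handshaking lemma then yields $2m=\sum_{v\in V(G)} deg(v)\leq n\cdot\Delta(G)\leq n(n-3)$, i.e. $m\leq \frac{n(n-3)}{2}$. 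Combining the two estimates gives $\frac{n(n-3)}{6}\leq m\leq \frac{n(n-3)}{2}$, as claimed.

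I do not expect any genuine obstacle: the only point worth stating explicitly is that excluding the values $\Delta(G)=n-1$ and $\Delta(G)=n-2$ leaves exactly the range $\Delta(G)\leq n-3$, which is immediate from $\Delta(G)\leq n-1$. Thus the theorem is a straightforward corollary of Lemmas~\ref{lem-upper-n-2-critical}, \ref{lem-not-n-2-critical}, and \ref{lem-not-n-2-critical-2nd} together with the Handshaking lemma. One could additionally remark that, as with the $(n-1)$-critical case, the double-horn graph $D_n$ shows the bounds lie in a meaningful range, though pinning down the exact extremal values would be the natural next question rather than part of this proof.
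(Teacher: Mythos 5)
Your proof is correct and follows the same route as the paper: the lower bound is quoted from Lemma~\ref{lem-upper-n-2-critical}, and the upper bound comes from ruling out $\Delta(G)\in\{n-2,n-1\}$ via Lemmas~\ref{lem-not-n-2-critical} and \ref{lem-not-n-2-critical-2nd} and then applying the Handshaking lemma with $\Delta(G)\leq n-3$. Your extra remark verifying (via Lemma~\ref{lem-free}) that $G$ contains an $I_3$ or $2K_2$, so that Lemma~\ref{lem-free-n-2} indeed applies, is a welcome detail the paper leaves implicit.
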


\begin{proof}
 The lower bound follows from the Lemma~\ref{lem-upper-n-2-critical}. Upper bound follows from the Lemma~\ref{lem-not-n-2-critical} and Lemma~\ref{lem-not-n-2-critical-2nd}.
\end{proof}

\section{Conclusion}
In this paper, we studied about the color critical graphs with respect to star coloring. We showed that the only $3$-critical graphs are $K_3$ and $P_4$. Next, we have given a characterization of graphs whose star chromatic number is $(n-1)$ or $(n-2)$. Finally, we gave the upper and lower bounds on a minimum number of edges in $(n-1)$-critical and $(n-2)$-critical graphs.

We conclude the paper with the following list of open problems for further research.

\begin{enumerate}
 \item For a given $k$ and $n$, what is the minimum (resp. maximum) number of edges in a $k$-critical graph on $n$ vertices with respect to star coloring. 
 \item For a given $k$ and $n$, give a characterization of $(n-k)$-critical graphs with respect to star coloring. 
  
\end{enumerate}

\bibliographystyle{splncs03}
\bibliography{BibFile}

\end{document}